\documentclass[12pt,a4paper,leqno]{amsart}

\usepackage{geometry,amssymb,amsmath,latexsym,amsthm,enumerate,graphicx}
\usepackage{amsthm}
\usepackage[OT2, OT1]{fontenc}
\usepackage{latexsym}
\usepackage{setspace} % setspaceパッケージのインクルード
\usepackage{xcolor}

\newtheorem{Theorem}{Theorem}[section]
\newtheorem{Lemma}[Theorem]{Lemma}

\newtheorem{Remark}[Theorem]{Remark}
\newtheorem*{pf}{Proof}
\newtheorem*{pf1}{Proof of Theorem \ref{Thm_multi-indexed}}
\makeatletter
\@namedef{subjclassname@2020}{%
  \textup{2020} Mathematics Subject Classification}
\makeatother

\numberwithin{equation}{section}
\usepackage{booktabs}
\usepackage[OT2, OT1]{fontenc}

\def\sha{{\ifmmode
\textup{\fontencoding{OT2}\selectfont sh}
\else
\fontencoding{OT2}\selectfont sh
\fi}}

\allowdisplaybreaks
\begin{document}

\title{Explicit formula for multi-indexed poly-Bernoulli numbers}
\author {Tomoko Kikuchi}
\author{Maki Nakasuji}
%\address{M. Nakasuji: Department of Information and Communication Science, Faculty of Science,
%Sophia University, 
%7-1 Kio-cho, Chiyoda-ku, Tokyo 102-8554, Japan}
%\email{nakasuji@sophia.ac.jp}

\subjclass[2020]{11M68}

\keywords{
Bernoulli number, explicit formula, duality formula}
%\thanks{This work was supported by Japan Society for the Promotion of Science, Grant-in-Aid for Scientific Research No. 22K03274 (M. Nakasuji).}
%\author{名前}
%\date{}
\maketitle

%Abstract
\begin{abstract}
The classical Bernoulli numbers $B_m$ can be expressed using Stirling numbers of the second kind, and M. Kaneko extended this framework by defining poly-Bernoulli numbers ${\mathbb B}_m^{(k)}$, for which explicit formulas using the Stirling numbers of the second kind and duality relations were obtained. Later, Kaneko and H. Tsumura introduced multi-indexed poly-Bernoulli numbers ${\mathbb B}_{m_1, \ldots, m_r}^{(k_1, \ldots, k_r)}$ using the 
multiple polylogarithm  called $\sha$-type and reached their duality properties via an associated $\eta$-function.
Explicit formulas for double-indexed poly-Bernoulli numbers ${\mathbb B}_{m_1, m_2}^{(k_1, k_2)}$ were obtained by Y. Baba, M. Nakasuji, and M. Sakata. In this article, we extend these results to general multi-indexed poly-Bernoulli numbers and use it to give an alternative proof of the duality of multi-indexed poly-Bernoulli numbers. 
\end{abstract}

\section{Introduction}
The classical Bernoulli numbers $B_m$ are defined by the following power series expansions: 
$$
\frac{te^t}{e^t - 1} = \sum_{m = 0}^\infty B_m \frac{t^m}{m!}
$$
for $m \in \mathbb{Z}_{\ge 0}$. This can be expressed by the Stirling numbers of the second kind. 

\begin{Theorem} \label{Thm_classical}
For $m \in \mathbb{Z}_{\ge 0}$, we have 
$$
B_m = (-1)^m \sum_{\ell = 0}^m \frac{(-1)^\ell \ell! {m \brace \ell}}{\ell + 1}. 
$$
Here, ${m \brace \ell}$ is a Stirling number of the second kind, which is defined in Section \ref{Section2}. 
\end{Theorem}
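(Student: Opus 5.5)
The plan is to compute the coefficients of the generating function directly. I will write $\frac{te^t}{e^t-1}$ as a power series in $u = 1-e^{-t}$, and then expand each power of $u$ using the exponential generating function of the Stirling numbers of the second kind. I will take that generating function in the form
$$
\frac{(e^t-1)^\ell}{\ell!} = \sum_{m=\ell}^\infty {m \brace \ell} \frac{t^m}{m!},
$$
as the defining property from Section \ref{Section2}, or as a standard consequence of the combinatorial definition.

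First, I rewrite the left-hand side as
$$
\frac{te^t}{e^t-1} = \frac{t}{1-e^{-t}}.
$$
I put $u = 1-e^{-t}$, so that $t = -\log(1-u)$. Using $-\log(1-u) = \sum_{\ell\ge 0} u^{\ell+1}/(\ell+1)$, this gives the formal power series identity
$$
\frac{t}{1-e^{-t}} = \frac{-\log(1-u)}{u} = \sum_{\ell=0}^\infty \frac{u^\ell}{\ell+1}.
$$
The substitution is legitimate at the level of formal power series in $t$, because $u$ has no constant term. So $u^\ell = O(t^\ell)$, and only finitely many $\ell$ contribute to each coefficient of $t^m$.

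Next, I expand $u^\ell$. I have $u^\ell = (-1)^\ell (e^{-t}-1)^\ell$. Applying the Stirling generating function with $t$ replaced by $-t$ gives
$$
u^\ell = (-1)^\ell \ell! \sum_{m\ge \ell} {m \brace \ell} \frac{(-1)^m t^m}{m!}.
$$
Substituting this and interchanging the two finite sums, the coefficient of $t^m/m!$ is
$$
(-1)^m\sum_{\ell=0}^m \frac{(-1)^\ell \ell! {m \brace \ell}}{\ell+1}.
$$
The upper limit is $m$ because ${m \brace \ell}=0$ for $\ell>m$. Comparing coefficients with the definition of $B_m$ then yields Theorem \ref{Thm_classical}.

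There is no real obstacle here. The only points needing care are the sign bookkeeping, which fixes the convention $B_1 = +\tfrac12$ as in the paper's definition, and justifying the composition of formal power series, which the $O(t^\ell)$ observation above handles. If Section \ref{Section2} defines ${m \brace \ell}$ only combinatorially or by a recurrence, I will first derive the generating function above by the standard induction on $\ell$.
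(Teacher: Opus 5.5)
Your argument is correct and is essentially the paper's own method. The paper quotes this result as classical, but its computation in \eqref{katei}--\eqref{righthand} with $r=1$ and $k_1=1$ is exactly yours: $\mathrm{Li}_1(1-e^{-t})/(1-e^{-t})=t/(1-e^{-t})$ is expanded as $\sum_{\ell\ge 0}(1-e^{-t})^\ell/(\ell+1)$, Lemma \ref{Lem_Stirling} is applied with $t\mapsto -t$, and your sign check ($B_1=+\tfrac12$) and formal-power-series justification are both sound.
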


As a generalization of $B_m$, M. Kaneko (\cite{Kaneko}) defined poly-Bernoulli numbers $\mathbb{B}_m^{(k)}$ for $k \in \mathbb{Z}$ and $m \in \mathbb{Z}_{\ge 0}$ as the coefficients of the formal power series expansion: 
$$
\frac{\mathrm{Li}_k(1 - e^{-t})}{1 - e^{-t}} = \sum_{m = 0}^\infty \mathbb{B}_m^{(k)} \frac{t^m}{m!}, 
$$
where $\mathrm{Li}_k(z) = \sum_{m = 1}^\infty \frac{z^m}{m^k}$. Note that when $k = 1$, $\mathbb{B}_m^{(1)}$ are classical Bernoulli numbers $B_m$. 
In \cite{Kaneko}, M. Kaneko obtained the explicit formula for $\mathbb{B}_m^{(k)}$ using the Stirling numbers of the second kind as an extension of Theorem \ref{Thm_classical}. 
\begin{Theorem}[\cite{Kaneko}, Theorem 1] \label{Thm_poly}
For $k \in \mathbb{Z}$ and $m \in \mathbb{Z}_{\ge 0}$, we have 
\begin{align} \label{eq_poly}
\mathbb{B}_m^{(k)} = (-1)^{m} \sum_{\ell = 0}^{m} \frac{(-1)^{\ell} \ell! {m \brace \ell}}{(\ell + 1)^{k}}. 
\end{align}
\end{Theorem}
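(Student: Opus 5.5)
The plan is to expand the generating function $\mathrm{Li}_k(1-e^{-t})/(1-e^{-t})$ directly in powers of $t$ and read off the coefficient of $t^m/m!$. The input is the exponential generating function of the Stirling numbers of the second kind,
\begin{align*}
\frac{(e^{x}-1)^{\ell}}{\ell!} = \sum_{m=\ell}^{\infty} {m \brace \ell} \frac{x^m}{m!},
\end{align*}
which I take as the definition (or a basic property) from Section \ref{Section2}. Since $1-e^{-t}$ has zero constant term, the expansions below are legitimate as formal power series in $t$ for every $k \in \mathbb{Z}$. For $k \le 0$, $\mathrm{Li}_k$ is a rational function, but its power series still converges formally after the substitution.

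First, I write
\begin{align*}
\frac{\mathrm{Li}_k(1-e^{-t})}{1-e^{-t}} = \sum_{n=1}^{\infty} \frac{(1-e^{-t})^{n-1}}{n^k} = \sum_{\ell=0}^{\infty} \frac{(1-e^{-t})^{\ell}}{(\ell+1)^k}.
\end{align*}
Next, I use $(1-e^{-t})^{\ell} = (-1)^{\ell}(e^{-t}-1)^{\ell}$ and apply the Stirling generating function with $x=-t$:
\begin{align*}
(1-e^{-t})^{\ell} = (-1)^{\ell}\,\ell! \sum_{m=\ell}^{\infty} {m \brace \ell} \frac{(-t)^m}{m!} = \ell! \sum_{m \ge \ell} (-1)^{m+\ell} {m \brace \ell} \frac{t^m}{m!}.
\end{align*}

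Substituting and interchanging the sums over $\ell$ and $m$ is allowed because, for each fixed $m$, only $\ell \le m$ contributes; this is the finiteness of formal power series. Comparing coefficients of $t^m/m!$ then gives $\mathbb{B}_m^{(k)} = (-1)^m \sum_{\ell=0}^m (-1)^{\ell}\ell!{m \brace \ell}/(\ell+1)^k$, which is \eqref{eq_poly}.

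No step is a genuine obstacle. The only points needing care are the sign bookkeeping in the substitution $x=-t$ and justifying the interchange of summation for negative $k$, where the coefficients $(\ell+1)^{-k}$ grow. Both are handled by working in $\mathbb{Q}[[t]]$, since $(1-e^{-t})^{\ell} = O(t^{\ell})$.
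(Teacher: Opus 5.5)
Your proof is correct and is essentially the paper's argument. The paper only cites this result from Kaneko, but its computation in \eqref{katei}--\eqref{righthand} (expand in powers of $1-e^{-t}$, apply Lemma \ref{Lem_Stirling} with $t\mapsto -t$, compare coefficients) is your argument for general $r$, and its $r=1$ case is Kaneko's original proof; your observation that $(1-e^{-t})^{\ell}=O(t^{\ell})$ justifies the interchange of sums in $\mathbb{Q}[[t]]$ correctly covers $k<0$.
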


Furthermore, in the same paper he also obtained a beautiful duality result as a corollary of the derivation of the explicit formula.
\begin{Theorem}[\cite{Kaneko}, Corollary] \label{Thm_dual}
For $k, m\geq 0$, we have the symmetries
\begin{align} \label{eq_duality}
\mathbb{B}_m^{(-k)} =\mathbb{B}_k^{(-m)}. 
\end{align}
\end{Theorem}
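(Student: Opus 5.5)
The plan is to prove the duality by computing the two-variable generating function
\[
F(x,y)=\sum_{k=0}^\infty\sum_{m=0}^\infty \mathbb{B}_m^{(-k)}\frac{x^k}{k!}\frac{y^m}{m!}
\]
in closed form and observing that it is symmetric in $x$ and $y$. Since the coefficients of $x^k y^m/(k!\,m!)$ determine $\mathbb{B}_m^{(-k)}$ uniquely, the symmetry $F(x,y)=F(y,x)$ is equivalent to (\ref{eq_duality}).

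\textbf{Step 1: rewrite the explicit formula.} By Theorem \ref{Thm_poly} with $k$ replaced by $-k$,
\[
\mathbb{B}_m^{(-k)}=(-1)^m\sum_{\ell=0}^m(-1)^\ell\,\ell!\,{m \brace \ell}(\ell+1)^k .
\]

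\textbf{Step 2: sum over $k$.} The only $k$-dependence is the factor $(\ell+1)^k$, so
\[
\sum_{k\ge0}(\ell+1)^k\frac{x^k}{k!}=e^{(\ell+1)x}.
\]
This gives
\[
F(x,y)=\sum_{\ell\ge0}(-1)^\ell\,\ell!\,e^{(\ell+1)x}\sum_{m\ge\ell}{m \brace \ell}\frac{(-y)^m}{m!}.
\]

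\textbf{Step 3: sum over $m$.} Use the standard exponential generating function for Stirling numbers of the second kind,
\[
\sum_{m\ge\ell}{m \brace \ell}\frac{t^m}{m!}=\frac{(e^t-1)^\ell}{\ell!},
\]
which I take from the definition in Section \ref{Section2}, or prove from the recurrence. Putting $t=-y$ turns the inner sum into $(e^{-y}-1)^\ell/\ell!$, and the factor $\ell!$ cancels. Therefore
\[
F(x,y)=e^{x}\sum_{\ell\ge0}\bigl(e^x(1-e^{-y})\bigr)^\ell=\frac{e^x}{1-e^x+e^{x-y}} .
\]

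\textbf{Step 4: symmetrize.} Multiply numerator and denominator by $e^y$:
\[
F(x,y)=\frac{e^{x+y}}{e^x+e^y-e^{x+y}},
\]
which is manifestly symmetric in $x$ and $y$. Comparing coefficients of $x^k y^m/(k!\,m!)$ in $F(x,y)$ and $F(y,x)$ then gives $\mathbb{B}_m^{(-k)}=\mathbb{B}_k^{(-m)}$.

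\textbf{Main obstacle: justifying the interchange of summations.} I plan to work entirely with formal power series in $x,y$. The $\ell$-th term contains the factor $(1-e^{-y})^\ell$, which has $y$-order at least $\ell$, so only finitely many $\ell$ contribute to each coefficient. Hence the geometric series in Step 3 converges $y$-adically and the rearrangements in Steps 2 and 3 are legitimate. The remaining points are routine: the Stirling generating function, and checking the sign bookkeeping $(-1)^m(-1)^\ell$ against $(e^{-y}-1)^\ell$.
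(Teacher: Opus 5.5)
Your proof is correct and is essentially the paper's route. The paper only cites this result from Kaneko, but its proof of Theorem \ref{generatingexp} in Section 5, specialized to $r=1$, is exactly your computation: the explicit formula, summing $(\ell+1)^k$ into $e^{(\ell+1)x}$, Lemma \ref{Lem_Stirling} for the $m$-sum, then a geometric series, ending in the symmetric function $e^{x+y}/(e^x-e^{x+y}+e^y)$; the remark after that theorem notes this coincides with Kaneko's original argument. Your formal-power-series justification (the $\ell$-th term has $y$-order at least $\ell$, so each coefficient is a finite sum) is a correct addition that the paper leaves implicit.
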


Later, the multiple polylogarithm of $\sha$-type was defined as follows  (cf. \cite{AK2}, \cite{Goncharov}): 
%  Goncharov (\cite{Goncharov}) defined the multiple polylogarithm of $\sha$-type as follows. 
\begin{align*}
\mathrm{Li}_{s_1, \ldots, s_r}^{\sha} (z_1, \ldots, z_r) = \sum_{0 < m_1 < \dots < m_r} \frac{z_1^{m_1} z_2^{m_2 - m_1} \dots z_r^{m_r - m_{r - 1}}}{m_1^{s_1} m_2^{s_2} \dots m_r^{s_r}} 
\end{align*}
for $r \in \mathbb{N}, s_1, \ldots, s_r \in \mathbb{C}$ and $z_1, \ldots, z_r \in \mathbb{C}$ with $|z_j| \le 1 \, (1 \le j \le r)$. M. Kaneko and H. Tsumura (\cite{KT}) defined multi-indexed poly-Bernoulli numbers $\mathbb{B}_{m_1, \ldots, m_r}^{(s_1, \ldots, s_r)}$ using $\mathrm{Li}^\sha_{s_1, \ldots, s_r}$: 
\begin{align} \label{Def_multi-indexed} 
\frac{\mathrm{Li}^\sha_{s_1, \ldots, s_r}(1 - e^{-\sum_{v = 1}^r t_v}, \ldots, 1 - e^{-t_{r - 1} - t_r}, 1 - e^{-t_r})}{\prod_{j = 1}^r (1 - e^{-\sum_{v = j}^r t_v})} = \sum_{m_1, \ldots, m_r = 0}^\infty \mathbb{B}_{m_1, \ldots, m_r}^{(s_1, \ldots, s_r)} \frac{t_1^{m_1} \cdots t_r^{m_r}}{m_1! \cdots m_r!}
\end{align}
for $t_1, \ldots, t_r \in \mathbb{C}$. 
\begin{Remark}
\rm{In} \cite{KT}, M. Kaneko and H. Tsumura defined 
\begin{align*}
\frac{\mathrm{Li}^\sha_{s_1, \ldots, s_r}(1 - e^{-\sum_{v = 1}^r t_v}, \ldots, 1 - e^{-t_{r - 1} - t_r}, 1 - e^{-t_r})}{\prod_{j = 1}^d (1 - e^{-\sum_{v = j}^r t_v})} = \sum_{m_1, \ldots, m_r = 0}^\infty {B}_{m_1, \ldots, m_r}^{(s_1, \ldots, s_r); (d)} \frac{t_1^{m_1} \cdots t_r^{m_r}}{m_1! \cdots m_r!}.
\end{align*}
And when $d = r$, they denote $B_{m_1, \ldots, m_r}^{(s_1, \ldots, s_r), (r)}$ by $\mathbb{B}_{m_1, \ldots, m_r}^{(s_1, \ldots, s_r)}$. In this article, we consider only the case $d=r$.
\end{Remark}

They defined the following $\eta$-function, 
\begin{multline*}
\eta(-k_1, \ldots, -k_r ; s_1, \ldots, s_r)
=\frac{1}{\prod_{j=1}^r\Gamma(s_j)}\int_0^{\infty}\cdots \int_0^{\infty}
\prod_{j=1}^rt_j^{s_j-1}\\
\times \frac{\mathrm{Li}^\sha_{s_1, \ldots, s_r}(1 - e^{-\sum_{v = 1}^r t_v}, \ldots, 1 - e^{-t_{r - 1} - t_r}, 1 - e^{-t_r})}{\prod_{j = 1}^r (1 - e^{-\sum_{v = j}^r t_v})} \prod_{j=1}^rdt_j
\end{multline*}
for $k_1, \ldots, k_r\in {\mathbb Z}_{\geq 0}$,
and through the study of this function, they reached at the duality of multi-indexed poly-Bernoulli numbers:
\begin{Theorem}[\cite{KT}, Theorem 5.4] \label{Thm_dual_KanekoTsumura}
For $m_1, \ldots, m_r, k_1, \ldots, k_r\in {\mathbb Z}_{\geq 0}$, 
\begin{align} \label{eq_duality_KT}
\mathbb{B}_{m_1, \ldots, m_r}^{(-k_1, \ldots, -k_r)} =\mathbb{B}_{k_1, \ldots, k_r}^{(-m_1, \ldots, -m_r)}. 
\end{align}
\end{Theorem}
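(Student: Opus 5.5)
The plan is to prove \eqref{eq_duality_KT} by computing a single generating function in two sets of variables and checking that it is symmetric under swapping them. This generalizes Kaneko's derivation of Theorem \ref{Thm_dual}. We introduce $y_1,\ldots,y_r$ and consider the formal power series
\[
F(t_1,\ldots,t_r;y_1,\ldots,y_r)=\sum_{m_1,\ldots,m_r\ge 0}\ \sum_{k_1,\ldots,k_r\ge 0}\mathbb{B}_{m_1,\ldots,m_r}^{(-k_1,\ldots,-k_r)}\prod_{j=1}^r\frac{t_j^{m_j}}{m_j!}\frac{y_j^{k_j}}{k_j!}.
\]
By \eqref{Def_multi-indexed}, $F$ is obtained by multiplying the left-hand side of \eqref{Def_multi-indexed}, with $s_j=-k_j$, by $\prod_j y_j^{k_j}/k_j!$ and summing over all $k_j$. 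Throughout we write $T_j=\sum_{v=j}^r t_v$, $Y_j=\sum_{v=j}^r y_v$ and $z_j=1-e^{-T_j}$.

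The first step is to sum over the $k_j$ inside the definition of $\mathrm{Li}^\sha$. For a fixed term $0<n_1<\dots<n_r$ we have $\sum_{k}\prod_j n_j^{k_j}y_j^{k_j}/k_j!=e^{\sum_j n_jy_j}$. Next we substitute $n_j=a_1+\dots+a_j$ with all $a_i\ge1$. This gives $z_j^{n_j-n_{j-1}}=z_j^{a_j}$ and $\sum_j n_jy_j=\sum_i a_iY_i$, so the sum over $n$ factors into independent geometric series:
\[
\sum_{k}\mathrm{Li}^\sha_{-k_1,\ldots,-k_r}(z_1,\ldots,z_r)\prod_j\frac{y_j^{k_j}}{k_j!}=\prod_{j=1}^r\sum_{a\ge1}(z_je^{Y_j})^{a}=\prod_{j=1}^r\frac{z_je^{Y_j}}{1-z_je^{Y_j}}.
\]
We then divide by $\prod_j z_j$, which is exactly the denominator in \eqref{Def_multi-indexed}. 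Multiplying numerator and denominator by $e^{T_j}$ gives
\[
F=\prod_{j=1}^r\frac{e^{T_j+Y_j}}{e^{T_j}+e^{Y_j}-e^{T_j+Y_j}}.
\]
This expression is visibly invariant under $(t_1,\ldots,t_r)\leftrightarrow(y_1,\ldots,y_r)$, because that swap exchanges $T_j$ and $Y_j$ for every $j$. Comparing the coefficients of $\prod_j t_j^{m_j}y_j^{k_j}/(m_j!k_j!)$ on both sides then yields \eqref{eq_duality_KT}.

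The main obstacle is justifying these manipulations as identities of formal power series in the $2r$ variables, since the interchange of summations and the geometric series are only formal. The key observation is that $z_j$ has zero constant term in the $t$-variables. Hence each $z_je^{Y_j}$ lies in the ideal generated by the $t_v$, so $\sum_a (z_je^{Y_j})^a$ converges in the $(t)$-adic topology. Likewise, for fixed total $t$-degree only finitely many $(n_1,\ldots,n_r)$ contribute, which legitimizes exchanging the sums over $k$ and over $n$. We also need to check that $e^{T_j}+e^{Y_j}-e^{T_j+Y_j}=1-(e^{T_j}-1)(e^{Y_j}-1)$ is invertible as a power series, which holds because its constant term is $1$.

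The paper presumably aims to reach the same symmetry through its explicit formula. In that setting, the coefficient extraction from the product above, using $\prod_j (e^{T_j}-1)^{\ell}(e^{Y_j}-1)^{\ell}$ and the expansion $(e^x-1)^\ell=\ell!\sum_m {m \brace \ell}x^m/m!$, produces a formula in which the Stirling-number factors for the $m$'s and for the $k$'s appear symmetrically. This gives the same duality at the level of the explicit formula.
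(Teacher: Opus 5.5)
Your proof is correct. The closed form you obtain for $F$ is exactly the paper's Theorem~\ref{generatingexp}, with $x_j$ in place of your $t_j$, and the duality is then read off from the $T_j\leftrightarrow Y_j$ symmetry just as in the paper.

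The difference is in how that closed form is reached. The paper starts from its explicit formula, Theorem~\ref{Thm_multi-indexed}, reorganizes the multinomial sums, and sums over the $m_j$ using Lemma~\ref{Lem_Stirling_koji}. This collapses everything back to $\prod_j(1-e^{-T_j})^{\ell_j}$, i.e.\ it undoes the derivation of the explicit formula and returns to the expansion \eqref{katei}. Only then does it sum over the $k_j$ to get $\prod_j e^{(\ell_1+\cdots+\ell_j+j)y_j}$ and evaluate geometric series in the $\ell_j$. That last step is precisely your computation with $a_j=\ell_j+1$.

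You go directly from the defining series of $\mathrm{Li}^\sha$, so you need neither Theorem~\ref{Thm_multi-indexed} nor the Stirling-number lemmas. You also make the formal-power-series justification explicit (the $(t)$-adic summability of the families indexed by $n$ and by $a$), which the paper leaves implicit. The paper's detour serves only its stated aim of presenting the duality as a consequence of the explicit formula, in the spirit of Kaneko's $r=1$ argument; logically it is not needed.

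Your closing guess about the paper is inaccurate. The paper does not produce an explicit formula in which the Stirling factors for the $m_j$ and the $k_j$ appear symmetrically; it only uses Theorem~\ref{Thm_multi-indexed} to recover the same generating function. That paragraph is not part of your argument, so nothing depends on it.
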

\begin{Remark}
\rm{In} \cite{KT}, M. Kaneko and H. Tsumura derived the duality of multi-indexed poly-Bernoulli numbers by establishing the duality of the following $\eta$-function with nonpositive integers:
$$\eta(-k_1, \ldots, -k_r ; -n_1, \ldots, -n_r)=\eta(-n_1, \ldots, -n_r ; -k_1, \ldots, -k_r),
$$
where $k_i$ and $n_i$ ($1\leq i\leq r$) are $0$ or positive integers. 
In \cite{Yamamoto}, S. Yamamoto 
extended their result by explicitly giving an integral representation of the multivariable $\eta$-function:
\begin{equation}\label{Yamamo}
\eta(s'_1, \ldots, s'_r ; s_1, \ldots, s_r)=\eta(s_1, \ldots, s_r ; s'_1, \ldots, s'_r),
\end{equation}
where $s_i$ and $s'_i$ ($1\leq i\leq r$) are complex variables. 
By considering the relations among the special values, it can be said that this gives another proof of Theorem \ref{Thm_dual_KanekoTsumura}.
As another context, we also note that Y. Komori and H. Tsumura obtained results on Yamamoto's result \eqref{Yamamo} in the one-variable case and its generalization in \cite{KomoriTsumura}.
\end{Remark}

On the other hand, in \cite{BNS}, Y. Baba, M. Nakasuji and M. Sakata obtained the explicit formula for the case of double-indexed poly-Bernoulli numbers $\mathbb{B}_{m_1, m_2}^{(k_1, k_2)}$ using the Stirling numbers of the second kind as an extension of Theorem \ref{Thm_poly}. 
\begin{Theorem}[\cite{BNS}, Theorem 9] \label{Thm_double-indexed}
For $k_1, k_2 \in \mathbb{Z}$ and $m_1, m_2 \in \mathbb{Z}_{\ge 0}$, we have 
$$
\mathbb{B}_{m_1, m_2}^{(k_1, k_2)} = (-1)^{m_1 + m_2} \sum_{\ell_1, \ell_2 = 0}^{m_1 + m_2} \frac{(-1)^{\ell_1 + \ell_2} \ell_1! \ell_2!}{(\ell_1 + 1)^{k_1} (\ell_1 + \ell_2 + 2)^{k_2}} \sum_{n = 0}^{m_2} {m_1 + n \brace \ell_1} {m_2 - n \brace \ell_2} \binom{m_2}{n}. 
$$
\end{Theorem}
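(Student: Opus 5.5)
Throughout I use the exponential generating function of the Stirling numbers of the second kind,
$$
\frac{(e^{x}-1)^{\ell}}{\ell!}=\sum_{m\ge 0}{m \brace \ell}\frac{x^m}{m!},
$$
and its companion with $x\mapsto -t$: $(1-e^{-t})^{\ell}/\ell! = (-1)^{\ell}\sum_m (-1)^m {m \brace \ell} t^m/m!$. The plan is to expand the left-hand side of \eqref{Def_multi-indexed} for $r=2$ into such powers and read off the coefficient of $t_1^{m_1}t_2^{m_2}/(m_1!m_2!)$. Write $u_1=1-e^{-t_1-t_2}$ and $u_2=1-e^{-t_2}$. By definition,
$$
\mathrm{Li}^{\sha}_{k_1,k_2}(u_1,u_2)=\sum_{0<n_1<n_2}\frac{u_1^{n_1}u_2^{n_2-n_1}}{n_1^{k_1}n_2^{k_2}} .
$$
Dividing by $u_1u_2$ and setting $n_1=\ell_1+1$, $n_2-n_1=\ell_2+1$ (with $\ell_1,\ell_2\ge0$) gives
$$
\frac{\mathrm{Li}^{\sha}_{k_1,k_2}(u_1,u_2)}{u_1u_2}=\sum_{\ell_1,\ell_2\ge0}\frac{u_1^{\ell_1}u_2^{\ell_2}}{(\ell_1+1)^{k_1}(\ell_1+\ell_2+2)^{k_2}} .
$$
This produces exactly the denominators appearing in Theorem \ref{Thm_double-indexed}. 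As formal power series in $t_1,t_2$ the sum is well defined for every $k_1,k_2\in\mathbb{Z}$, because $u_1^{\ell_1}u_2^{\ell_2}$ has total degree at least $\ell_1+\ell_2$.

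Next I expand the factors. First,
$$
u_1^{\ell_1}=(-1)^{\ell_1}\ell_1!\sum_{p\ge 0}(-1)^p{p \brace \ell_1}\frac{(t_1+t_2)^p}{p!},\qquad
u_2^{\ell_2}=(-1)^{\ell_2}\ell_2!\sum_{q\ge0}(-1)^q{q\brace \ell_2}\frac{t_2^q}{q!}.
$$
Then I expand $(t_1+t_2)^p/p!=\sum_{a+b=p}t_1^a t_2^b/(a!\,b!)$. Extracting the coefficient of $t_1^{m_1}t_2^{m_2}$ forces $a=m_1$, $b=n$ and $q=m_2-n$ with $0\le n\le m_2$. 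Multiplying by $m_1!m_2!$ turns $1/(n!(m_2-n)!)$ into $\binom{m_2}{n}$. The sign becomes $(-1)^{m_1+n+m_2-n}=(-1)^{m_1+m_2}$, and the Stirling numbers become ${m_1+n\brace \ell_1}{m_2-n\brace \ell_2}$.

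Finally, ${m_1+n\brace\ell_1}=0$ once $\ell_1>m_1+n$, and similarly ${m_2-n\brace\ell_2}=0$ once $\ell_2>m_2-n$. So the infinite sum truncates, and letting $\ell_1,\ell_2$ run over $0,\dots,m_1+m_2$ covers every nonzero term. Interchanging the finite sums then gives the stated formula.

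The main point requiring care is justifying the interchange of the infinite $\ell$-sums with coefficient extraction when $k_i$ are negative. I would handle this by working in $\mathbb{Q}[[t_1,t_2]]$ and using the degree bound above: each coefficient receives contributions from only finitely many $(\ell_1,\ell_2)$. Beyond that the argument is bookkeeping of the index shift $n_2=\ell_1+\ell_2+2$ and of the signs.
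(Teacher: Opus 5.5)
Your proof is correct and is essentially the paper's computation specialized to $r=2$. You expand $\mathrm{Li}^{\sha}_{k_1,k_2}(u_1,u_2)/(u_1u_2)$ as $\sum_{\ell_1,\ell_2}u_1^{\ell_1}u_2^{\ell_2}/((\ell_1+1)^{k_1}(\ell_1+\ell_2+2)^{k_2})$, convert each power with the Stirling generating function (Lemma \ref{Lem_Stirling}), expand $(t_1+t_2)^p$ binomially and compare coefficients; the only difference is cosmetic, in that you read off the coefficient of $t_1^{m_1}t_2^{m_2}$ directly (with a sound degree argument covering negative $k_i$) where the paper reindexes the sums via Lemmas \ref{Lem_sigma1} and \ref{Lem_sigma2}.
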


In this article, we extend Theorem \ref{Thm_double-indexed} to the multi-indexed poly-Bernoulli numbers of general indices and obtain the following theorem. 
\begin{Theorem} \label{Thm_multi-indexed}
For $k_1, \ldots, k_r \in {\mathbb{Z}}$ and $m_1, \ldots, m_r \in {\mathbb{Z}}_{\ge 0}$, we have 
\begin{multline} \label{eq_multi-indexed}
{\mathbb{B}}_{m_1, \ldots, m_r}^{(k_1, \ldots, k_r)} = (-1)^{m_1 + \cdots + m_r} \sum_{\ell_1, \ldots, \ell_r = 0}^{m_1 + \cdots + m_r} \frac{(-1)^{\ell_1 + \cdots + \ell_r} \ell_1! \cdots \ell_r!}{(\ell_1 + 1)^{k_1} \cdots (\ell_1 + \cdots + \ell_r + r)^{k_r}}\\
\times
\sum_{\substack{{n_{11} + n_{12} = m_2} \\ \vdots \\
{n_{r - 1, 1} + \cdots + n_{r - 1, r} = m_r} }}
\prod_{i = 1}^{r - 1}\binom{m_{i + 1}}{n_{i, 1}, \ldots, n_{i, i + 1}}
\times \prod_{j = 1}^r {m_j - \sum_{i = 1}^{j - 1} n_{j - 1, i} + \sum_{i = j}^{r - 1} n_{i, j} \brace \ell_j}. 
\end{multline}
\end{Theorem}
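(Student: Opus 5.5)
The plan is to compute the coefficients of the generating function \eqref{Def_multi-indexed} directly, in the same way as the proofs of Theorem \ref{Thm_poly} and Theorem \ref{Thm_double-indexed}. We work with formal power series throughout, so $k_j\in\mathbb{Z}$ may be arbitrary.

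Write $T_j=t_j+\cdots+t_r$ and $z_j=1-e^{-T_j}$. In $\mathrm{Li}^\sha_{k_1,\ldots,k_r}(z_1,\ldots,z_r)$, the summand indexed by $0<n_1<\cdots<n_r$ (with $n_0=0$) is $\prod_j z_j^{n_j-n_{j-1}}/n_j^{k_j}$. Put $a_j=n_j-n_{j-1}-1\ge 0$, so that $n_j=a_1+\cdots+a_j+j$. Dividing by $\prod_j z_j$ then gives
\begin{align*}
\frac{\mathrm{Li}^\sha_{k_1,\ldots,k_r}(z_1,\ldots,z_r)}{\prod_{j=1}^r z_j}
=\sum_{\ell_1,\ldots,\ell_r\ge 0}\frac{\prod_{j=1}^r(1-e^{-T_j})^{\ell_j}}{(\ell_1+1)^{k_1}(\ell_1+\ell_2+2)^{k_2}\cdots(\ell_1+\cdots+\ell_r+r)^{k_r}}.
\end{align*}
This already produces the denominators appearing in \eqref{eq_multi-indexed}. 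Since $(1-e^{-T_j})^{\ell_j}$ has order $\ell_j$ in the $t$'s, the sum is a well-defined formal power series for any $k_j\in\mathbb{Z}$.

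Next I use the generating function of the Stirling numbers of the second kind (recalled in Section \ref{Section2}), $(e^x-1)^\ell/\ell!=\sum_M {M\brace \ell}x^M/M!$. Substituting $x=-T$ yields
\[
(1-e^{-T})^{\ell}=(-1)^\ell\ell!\sum_{M\ge0}{M\brace \ell}\frac{(-1)^M T^M}{M!}.
\]
Applying this to each factor, I then expand each $T_j^{M_j}/M_j!=(t_j+\cdots+t_r)^{M_j}/M_j!$ multinomially. The remaining task is to read off the coefficient of $t_1^{m_1}\cdots t_r^{m_r}/(m_1!\cdots m_r!)$.

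The variable $t_i$ occurs in $T_1,\ldots,T_i$. Its exponent $m_i$ must therefore be split as $m_i=n_{i-1,1}+\cdots+n_{i-1,i}$, where $n_{i-1,j}$ is the part of $m_i$ coming from $T_j$. For $i=1$ no splitting occurs, since $t_1$ appears only in $T_1$. Collecting terms, the factor $m_i!/\prod_j n_{i-1,j}!$ gives the multinomial $\binom{m_{i+1}}{n_{i,1},\ldots,n_{i,i+1}}$ after reindexing $i\mapsto i+1$, and the $M_j!$ cancel. The exponent of $T_j$ is then forced to be
\[
M_j=\Bigl(m_j-\sum_{i=1}^{j-1}n_{j-1,i}\Bigr)+\sum_{i=j}^{r-1}n_{i,j}.
\]
Here the diagonal entry $n_{j-1,j}$ (the part of $m_j$ staying in $T_j$) has been rewritten as $m_j$ minus the other parts; this is exactly the argument of the Stirling numbers in \eqref{eq_multi-indexed}. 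Since $\sum_j M_j=\sum_j m_j$, the signs $(-1)^{M_j}$ combine to $(-1)^{m_1+\cdots+m_r}$. Together with $(-1)^{\ell_j}\ell_j!$ from each factor, this gives the stated formula.

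Finally, ${M\brace \ell}=0$ for $\ell>M$, and $M_j\le m_1+\cdots+m_r$. Hence the $\ell_j$-sums may be truncated at $m_1+\cdots+m_r$, and all sums are finite.

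I expect the main obstacle to be this bookkeeping step: verifying that the multinomial splitting of each $m_i$ among $T_1,\ldots,T_i$ matches the index conventions $n_{i,j}$ of the statement. In particular, one must check that the product of multinomials over $i=1,\ldots,r-1$ accounts precisely for all the splittings. I would carry out $r=2$ first to confirm agreement with Theorem \ref{Thm_double-indexed}, and then write the general case by the same pattern.
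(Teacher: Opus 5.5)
Your proposal is correct, but it is organized differently from the paper's proof. The paper inducts on $r$. It reads off from the inductive hypothesis the identity \eqref{eq_Lemmas}, which is the multinomial form of the product of the last $r$ Stirling series for a fixed tuple $(\ell_2,\ldots,\ell_{r+1})$. It then multiplies in the series attached to $t_1+\cdots+t_{r+1}$ and expands that single power multinomially. Finally, over several displays, it re-sums and renames indices using Lemmas \ref{Lem_sigma1} and \ref{Lem_sigma2} until the new row $n_{r,1},\ldots,n_{r,r+1}$ sits in the right place.

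You instead expand every $T_j^{M_j}$ at once and index by the triangular array, with $n_{i-1,j}$ the part of $m_i$ drawn from $T_j$. Since the coefficient extraction factorizes over the variables $t_i$, three things come out in one step:
the product of multinomials;
the forced values of $M_j$, with the diagonal entry $n_{j-1,j}$ rewritten as $m_j-\sum_{i<j}n_{j-1,i}$ exactly as in \eqref{eq_multi-indexed};
and the sign $(-1)^{\sum_j M_j}=(-1)^{m_1+\cdots+m_r}$.
So the bookkeeping you flag as the main obstacle is already complete.

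Your route is shorter and needs no summation-interchange lemmas. It also makes the combinatorial meaning of the $n_{i,j}$ transparent, and it proves the identity for each fixed $(\ell_1,\ldots,\ell_r)$ directly. The paper instead obtains \eqref{eq_Lemmas} by ``comparing coefficients'' of sums over all $\ell$. That step tacitly requires separating different $\ell$-tuples, which is legitimate but not stated: the formula holds for every $(k_1,\ldots,k_r)\in\mathbb{Z}^r$, and $\ell\mapsto(\ell_1+1,\ell_1+\ell_2+2,\ldots)$ is injective. The induction gains no generality; it only limits each step to one new multinomial expansion.
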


We remark here that the proofs of Theorem \ref{Thm_double-indexed} and Theorem \ref{Thm_multi-indexed} are different.

Using this explicit formula, Theorem \ref{Thm_multi-indexed},  we will give another proof of the duality of multi-indexed poly-Bernoulli numbers,Theorem \ref{Thm_dual_KanekoTsumura}.

This article is organized as follows. In Section 2, we present some preliminaries, including properties of the Stirling numbers of the second kind. In Section 3, we give the proof of Theorem \ref{Thm_multi-indexed}, which is the main result of this article. In Section 4, restricting ourselves to the case of three variables, we discuss results obtained along the method of \cite{BNS}. Finally, in Section 5, we discuss the duality using Theorem \ref{Thm_multi-indexed}.
%%%%%%%%%%%%%%%%%%%%%%%%%%%%
\section{Preliminaries} \label{Section2}
The Stirling numbers of the second kind, say ${m \brace \ell}$, are defined for any $m, \ell \in \mathbb{Z}$ by the following recurrence formula: 
$$
{0 \brace 0} = 1, {m \brace 0} = {0 \brace \ell} = 0 \, (m, \ell \ne 0), {m + 1 \brace \ell} = {m \brace \ell - 1} + \ell{m \brace \ell}. 
$$
Here, ${m \brace \ell} = 0$ when $\ell > m$. 

It is known that the generating function for the Stirling numbers of the second kind have the following formula. 
\begin{Lemma} \label{Lem_Stirling}
For any $m \in \mathbb{Z}$ and $\ell \in \mathbb{Z}_{\ge 0}$, we have 
\begin{align*}
\sum_{m = \ell}^\infty {m \brace \ell} \frac{t^m}{m!}=\frac{(e^t - 1)^\ell}{\ell!}. 
\end{align*}
\end{Lemma}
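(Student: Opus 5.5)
We prove the generating function identity in Lemma \ref{Lem_Stirling} by induction on $\ell$, working throughout with formal power series in $t$. Put
$$
F_\ell(t) = \sum_{m = 0}^\infty {m \brace \ell} \frac{t^m}{m!}.
$$
By the convention ${m \brace \ell} = 0$ for $\ell > m$, this sum is the same as the one in the statement, which starts at $m = \ell$. We will show that $F_\ell(t) = (e^t - 1)^\ell/\ell!$ for every $\ell \ge 0$.

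\emph{Base case.} For $\ell = 0$ the initial conditions ${0 \brace 0} = 1$ and ${m \brace 0} = 0$ for $m \ne 0$ give $F_0(t) = 1 = (e^t - 1)^0/0!$.

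\emph{Differential equation.} Differentiating term by term and applying the recurrence ${m+1 \brace \ell} = {m \brace \ell-1} + \ell {m \brace \ell}$ for $m \ge 0$ gives
$$
F_\ell'(t) = \sum_{m \ge 0} {m+1 \brace \ell} \frac{t^m}{m!} = F_{\ell-1}(t) + \ell F_\ell(t) \qquad (\ell \ge 1).
$$
For $\ell \ge 1$ we also have $F_\ell(0) = {0 \brace \ell} = 0$. Together, the equation and this initial value determine $F_\ell$ uniquely once $F_{\ell-1}$ is known. Indeed, comparing coefficients of $t^m/m!$ recovers the recurrence, which fixes every coefficient from the constant term onward.

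\emph{Inductive step.} Assume $F_{\ell-1}(t) = (e^t-1)^{\ell-1}/(\ell-1)!$ and set $G(t) = (e^t-1)^\ell/\ell!$. Then $G(0) = 0$ and
$$
G'(t) = \frac{e^t (e^t-1)^{\ell-1}}{(\ell-1)!} = \frac{\bigl((e^t - 1) + 1\bigr)(e^t-1)^{\ell-1}}{(\ell-1)!} = \ell G(t) + F_{\ell-1}(t).
$$
So $G$ satisfies the same equation and the same initial condition as $F_\ell$, and the uniqueness above gives $F_\ell = G$.

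The only delicate point is the uniqueness step. It is cleanest to phrase it coefficientwise: both series have coefficients satisfying the same first-order recurrence in $m$ with the same starting value, so they coincide. No analytic convergence argument is needed.

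An alternative route uses the explicit formula ${m \brace \ell} = \frac{1}{\ell!}\sum_{j}(-1)^{\ell-j}\binom{\ell}{j} j^m$ together with the binomial expansion of $(e^t-1)^\ell$. That formula, however, would itself have to be derived from the recurrence, so the induction above is more direct.
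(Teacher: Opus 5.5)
Your proof is correct. From the defining recurrence you get $F_\ell' = F_{\ell-1} + \ell F_\ell$ with $F_\ell(0)=0$ for $\ell\ge 1$. The series $(e^t-1)^\ell/\ell!$ satisfies the same first-order equation and the same initial value, and coefficientwise uniqueness finishes the argument.

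The paper does not actually prove Lemma \ref{Lem_Stirling}; it quotes it as known. The closest argument in the paper is the proof of Lemma \ref{Lem_Stirling_koji}, which at $r=1$ is exactly your ``alternative route''. It imports ${n \brace \ell}=\frac{(-1)^\ell}{\ell!}\sum_{j=0}^{\ell}(-1)^{j}\binom{\ell}{j}j^n$ as a well-known fact, sums the exponential series $\sum_n (jt)^n/n! = e^{jt}$, and closes with the binomial theorem.

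Your induction works directly from the recurrence the paper takes as its definition, so nothing has to be imported. The paper's route is shorter once the explicit formula is granted, and it extends to several variables because $j^{m_1+\cdots+m_r}$ factors as $j^{m_1}\cdots j^{m_r}$. Your version gives that extension just as easily. For any series $f(t)=\sum_n a_n t^n/n!$, the multinomial theorem shows that the coefficient of $t_1^{m_1}\cdots t_r^{m_r}/(m_1!\cdots m_r!)$ in $f(t_1+\cdots+t_r)$ is $a_{m_1+\cdots+m_r}$. Hence Lemma \ref{Lem_Stirling_koji} follows immediately from Lemma \ref{Lem_Stirling}.
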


The following can be shown as an extension of Lemma \ref{Lem_Stirling}.
\begin{Lemma} \label{Lem_Stirling_koji}
For any $m_1, \cdots, m_r \in \mathbb{Z}$ and $\ell \in \mathbb{Z}_{\ge 0}$, we have 
\begin{align*}
\sum_{m_1 = 0}^\infty
\cdots
\sum_{m_r = 0}^\infty
 {m_1+\cdots+m_r \brace \ell} \frac{t_1^{m_1}\cdots t_r^{m_r}}{m_1!\cdots m_r!}=\frac{(e^{t_1+\cdots+t_r} - 1)^\ell}{\ell!}. 
\end{align*}
\end{Lemma}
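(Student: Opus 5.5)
The statement to prove is Lemma \ref{Lem_Stirling_koji}. The plan is to reduce it to the one-variable generating function of Lemma \ref{Lem_Stirling} by the multinomial theorem. Start from the right-hand side and write $T = t_1 + \cdots + t_r$. By Lemma \ref{Lem_Stirling},
\begin{align*}
\frac{(e^{T}-1)^\ell}{\ell!} = \sum_{N=0}^\infty {N \brace \ell} \frac{T^N}{N!},
\end{align*}
where the terms with $N<\ell$ vanish because ${N \brace \ell}=0$ for $\ell > N$. This lets us start the sum at $N=0$.

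Next, expand each power $T^N$ by the multinomial theorem:
\begin{align*}
\frac{T^N}{N!} = \sum_{\substack{m_1+\cdots+m_r=N\\ m_i\ge 0}} \frac{t_1^{m_1}\cdots t_r^{m_r}}{m_1!\cdots m_r!}.
\end{align*}
Substituting this and regrouping the double sum (over $N$ and then over compositions of $N$) as a single sum over all $(m_1,\dots,m_r)\in\mathbb{Z}_{\ge0}^r$ gives exactly
\begin{align*}
\sum_{m_1,\dots,m_r\ge 0} {m_1+\cdots+m_r \brace \ell}\frac{t_1^{m_1}\cdots t_r^{m_r}}{m_1!\cdots m_r!}.
\end{align*}

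The only point needing care is justifying the regrouping. Working in the ring of formal power series in $t_1,\dots,t_r$, each monomial $t_1^{m_1}\cdots t_r^{m_r}$ arises from exactly one $N$, namely $N=m_1+\cdots+m_r$. So the rearrangement is a finite identity coefficient by coefficient. Alternatively, both sides converge absolutely for all complex $t_i$, since $\sum_N {N\brace \ell}|T|^N/N!$ is dominated by $(e^{|t_1|+\cdots+|t_r|}+1)^\ell/\ell!$. There is no real obstacle beyond this bookkeeping.
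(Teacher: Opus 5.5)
Your proof is correct, but it takes a different route from the paper's. The paper never invokes Lemma \ref{Lem_Stirling}. It inserts the explicit formula ${n \brace \ell} = \frac{(-1)^\ell}{\ell!}\sum_{i=0}^{\ell}(-1)^i\binom{\ell}{i} i^n$ with $n = m_1+\cdots+m_r$. It then interchanges the finite sum over $i$ with the sums over the $m_j$, so the multiple series factors as $\prod_{j} e^{i t_j} = e^{i(t_1+\cdots+t_r)}$. The binomial theorem finishes the computation, giving $(e^{t_1+\cdots+t_r}-1)^\ell/\ell!$. In effect the paper re-derives the one-variable generating function directly in $r$ variables; its case $r=1$ is the standard proof of Lemma \ref{Lem_Stirling}. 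You instead take Lemma \ref{Lem_Stirling} as given and substitute $t = t_1+\cdots+t_r$, which is legitimate in $\mathbb{Q}[[t_1,\dots,t_r]]$ because this series has zero constant term. You then redistribute via the multinomial theorem. Your argument isolates the real content: for any exponential generating function $F(t)=\sum_N a_N t^N/N!$ one has $F(t_1+\cdots+t_r)=\sum_{m_1,\dots,m_r} a_{m_1+\cdots+m_r}\prod_j t_j^{m_j}/m_j!$. So the lemma is an immediate corollary of Lemma \ref{Lem_Stirling}, and nothing specific to Stirling numbers is needed. The paper's argument is self-contained modulo the explicit formula and avoids composing power series, at the cost of redoing the one-variable computation. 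One small imprecision in your convergence remark: absolute convergence of the multiple series is controlled by $\sum_N {N \brace \ell}(|t_1|+\cdots+|t_r|)^N/N! = (e^{|t_1|+\cdots+|t_r|}-1)^\ell/\ell!$ (using ${N \brace \ell}\ge 0$), not by the series in $|T|$. Your formal power series justification already suffices on its own, so this does not affect the proof.
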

\begin{pf}
\rm{This} can be obtained by a direct calculation.
Using the well-known relation between the Stirling numbers of the second kind and binomial coefficients,
$$
 {n \brace m}=\frac{(-1)^m}{m!}\sum_{\ell=0}^{m}(-1)^{\ell}
 \binom{m}{\ell}\ell^n,
$$
we have
\begin{align*}
&\sum_{m_1 = 0}^\infty
\cdots
\sum_{m_r = 0}^\infty
 {m_1+\cdots+m_r \brace \ell} \frac{t_1^{m_1}\cdots t_r^{m_r}}{m_1!\cdots m_r!}\\
 &=
 \sum_{m_1 = 0}^\infty
\cdots
\sum_{m_r = 0}^\infty
\left(
\frac{(-1)^{\ell}}{\ell !}\sum_{i=0}^{\ell}(-1)^i
\binom{\ell}{i}i^{m_1+\cdots + m_r}
\right)
\frac{t_1^{m_1}\cdots t_r^{m_r}}{m_1!\cdots m_r!}\\
&=
\frac{(-1)^{\ell}}{\ell !}
\sum_{i=0}^{\ell}
(-1)^i
\binom{\ell}{i}
\left(
\sum_{m_1=0}^{\infty}
\frac{(it_1)^{m_1}}{m_1 !}
\right)
\cdots
\left(
\sum_{m_r=0}^{\infty}
\frac{(it_r)^{m_r}}{m_r !}
\right)\\
&=
\frac{(-1)^{\ell}}{\ell !}
\sum_{i=0}^{\ell}
(-1)^i
\binom{\ell}{i}
e^{(t_1+\cdots +t_r)i}
(-1)^{\ell-i}
=\frac{(e^{t_1+\cdots +t_r}-1)^{\ell}}{\ell !}.\;\;\blacksquare
\end{align*}
\end{pf}

Let  $f(n_0, n_1, \ldots, n_r)$ be the function of $(r+1)$-variables.
Then, regarding the interchange of the order of summations, the following holds. 
\begin{Lemma} \label{Lem_sigma1}
For $n_i \; (0 \leq i \leq r) \in \mathbb{Z}_{\ge 0}$, we have
\begin{multline*} %\label{eq_sigma1}
\sum_{n_0 = 0}^\infty \sum_{n_1 = 0}^{n_0} \sum_{n_2 = 0}^{n_0 - n_1} \cdots \sum_{n_r = 0}^{n_0 - (n_1 + \cdots + n_{r - 1})} f(n_0, n_1, n_2, \ldots, n_r)\\
= \sum_{n_1 = 0}^\infty \sum_{n_2 = 0}^\infty \cdots \sum_{n_r = 0}^\infty \sum_{n_0 = 0}^\infty f(n_0 + n_1 + n_2 + \cdots + n_r, n_1, n_2, \ldots, n_r). 
\end{multline*}
\end{Lemma}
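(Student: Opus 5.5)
This identity is a reindexing of a sum over a region of lattice points in $\mathbb{Z}_{\ge 0}^{r+1}$, so I will prove it by identifying both sides with the same sum over one index set. I assume, as is implicit in how the lemma is used, that $f$ is such that both sides make sense: either $f\ge 0$, or absolute convergence, or formal power series where each coefficient involves only finitely many terms. Under that assumption the summation order may be rearranged freely, and it suffices to exhibit a bijection between the index sets that matches the summands.

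\textbf{Step 1: the left-hand index set.} The nested bounds $0\le n_1\le n_0$, $0\le n_2\le n_0-n_1$, \dots, $0\le n_r\le n_0-(n_1+\cdots+n_{r-1})$ describe exactly the set
\[
S=\{(n_0,n_1,\ldots,n_r)\in\mathbb{Z}_{\ge 0}^{r+1} : n_1+\cdots+n_r\le n_0\}.
\]
I will check this by induction on the number of inner sums. Each successive upper bound is nonnegative precisely when the previous constraint holds, and the last one is equivalent to $n_1+\cdots+n_r\le n_0$. Hence the left-hand side equals $\sum_{(n_0,\ldots,n_r)\in S} f(n_0,n_1,\ldots,n_r)$.

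\textbf{Step 2: the change of variables.} Consider the map
\[
\phi:\mathbb{Z}_{\ge 0}^{r+1}\to S,\qquad (m,n_1,\ldots,n_r)\mapsto (m+n_1+\cdots+n_r,\,n_1,\ldots,n_r).
\]
Its image lies in $S$. It is a bijection, with inverse $(n_0,n_1,\ldots,n_r)\mapsto(n_0-n_1-\cdots-n_r,\,n_1,\ldots,n_r)$, and the first coordinate of this inverse is nonnegative exactly by the defining condition of $S$. Summing $f\circ\phi$ over $\mathbb{Z}_{\ge 0}^{r+1}$ in the order $n_1,\ldots,n_r$ first and then $m$, and renaming $m$ as $n_0$, gives precisely the right-hand side.

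The only delicate point is the justification for rearranging infinite sums. In the intended application, $f$ carries monomials $t_1^{n_0}\cdots$ inside formal power series, and for each fixed total degree only finitely many index tuples contribute. The rearrangement is therefore valid coefficientwise, so I would state the lemma under that (or an absolute convergence) hypothesis. The combinatorial content is routine.
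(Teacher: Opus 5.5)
Your proof is correct and is essentially the paper's argument. The paper swaps $\sum_{n_0}$ with $\sum_{n_1}$, shifts $n_0\mapsto n_0+n_1$, and iterates this $r$ times; the composite of those shifts is exactly your bijection $\phi\colon (m,n_1,\ldots,n_r)\mapsto(m+n_1+\cdots+n_r,n_1,\ldots,n_r)$ onto $S$, so doing it in one step via the explicit description of $S$ is cleaner, and your remark that rearranging needs a hypothesis (absolute convergence, or coefficientwise finiteness for formal power series) makes explicit something the paper leaves implicit.
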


\begin{pf}
\rm{We} interchange the summations over $n_0$ and $n_1$. 
\begin{align*}
&\sum_{n_0 = 0}^\infty \sum_{n_1 = 0}^{n_0} \sum_{n_2 = 0}^{n_0 - n_1} \cdots \sum_{n_r = 0}^{n_0 - (n_1 + \cdots + n_{r - 1})} f(n_0, n_1, \ldots, n_r)\\
&= \sum_{n_1 = 0}^\infty \sum_{n_0 = n_1}^\infty \sum_{n_2 = 0}^{n_0 - n_1} \cdots \sum_{n_r = 0}^{n_0 - (n_1 + \cdots + n_{r - 1})} f(n_0, n_1, \ldots, n_r)\\
&= \sum_{n_1 = 0}^\infty \sum_{n_0 = 0}^\infty \sum_{n_2 = 0}^{n_0} \cdots \sum_{n_r = 0}^{n_0 - (n_2 + \cdots + n_{r - 1})} f(n_0 + n_1, n_1, \ldots, n_r). 
\end{align*}
\rm{By} iterating the same operation $r$ times, we obtain the right-hand side of the assertion. $\blacksquare$
\end{pf}

\begin{Lemma} \label{Lem_sigma2}
For $n_i \; (0 \leq i \leq r) \in \mathbb{Z}_{\ge 0}$, we have 
\begin{align*}
\sum_{n_0 = 0}^{k} \sum_{n_1 + \cdots n_r = k - n_0} f(n_0, n_1, \ldots, n_r) = \sum_{n_0 + n_1 + \cdots + n_{r - 1} + n_r = k} f(n_0, n_1, \ldots, n_r). 
\end{align*}
\end{Lemma}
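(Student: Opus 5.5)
The statement is a purely combinatorial identity about reindexing finite sums, so the plan is to prove it by a bijection of index sets rather than by any analytic argument. Throughout, $f$ is an arbitrary function of $(r+1)$ nonnegative integer variables, and all sums are finite, so no convergence questions arise.

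The left-hand side runs over pairs consisting of $n_0\in\{0,\ldots,k\}$ together with an $r$-tuple $(n_1,\ldots,n_r)\in\mathbb{Z}_{\ge 0}^r$ satisfying $n_1+\cdots+n_r=k-n_0$. Equivalently, it runs over all $(r+1)$-tuples $(n_0,n_1,\ldots,n_r)\in\mathbb{Z}_{\ge0}^{r+1}$ with $0\le n_0\le k$ and $n_0+n_1+\cdots+n_r=k$.

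The right-hand side runs over all $(r+1)$-tuples of nonnegative integers with $n_0+\cdots+n_r=k$. For any such tuple, nonnegativity of $n_1,\ldots,n_r$ forces $n_0\le k$. Hence the extra constraint $0\le n_0\le k$ on the left is automatically satisfied, and the two index sets coincide.

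The summand on both sides is the same, namely $f(n_0,n_1,\ldots,n_r)$. The two sums are therefore the same finite sum, merely organised differently: the left groups terms by the value of $n_0$, while the right does not. This is the usual decomposition of a sum over a disjoint union of fibres, with the fibres indexed by the value of $n_0$.

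The only point needing care is the boundary convention. When $n_0=k$, the inner sum on the left is over $r$-tuples with $n_1+\cdots+n_r=0$; this contains exactly the single tuple $(0,\ldots,0)$ and must not be treated as empty. When $r=1$, the inner sum reduces to the single term $n_1=k-n_0$. Checking that both conventions match the right-hand side completes the argument.

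If desired, the lemma can also be obtained by induction on $r$, in the spirit of the proof of Lemma \ref{Lem_sigma1}. However, the direct bijection argument above is shorter and I would present that.
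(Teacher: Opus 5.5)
Your proposal is correct and is essentially the paper's argument: both proofs observe that the two sides range over the same set of nonnegative $(r+1)$-tuples with $n_0+\cdots+n_r=k$. The paper writes the left side as iterated sums with $n_r=k-n_0-(n_1+\cdots+n_{r-1})$ eliminated, while you state the equality of index sets directly and note that $n_0\le k$ is automatic; this is the same reasoning.
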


\begin{pf}
\rm{We} consider the range of the summation. 
\begin{align*}
& \sum_{n_0 = 0}^{k} \sum_{n_1 + \cdots n_r = k - n_0} f(n_0, n_1, \ldots, n_r) \\
&= \sum_{n_0 = 0}^{k} \sum_{n_1 = 0}^{k - n_0} \ldots \sum_{n_{r - 1} = 0}^{k - n_0 - (n_1 + \cdots + n_{r - 2})} f(n_0, n_1, \cdots, k - n_0 - (n_1 + \cdots + n_{r - 2} + n_{r - 1}))\\
&= \sum_{n_0 + n_1 + \cdots + n_{r - 1} + n_r = k} f(n_0, n_1, \ldots, n_r). \; \; \blacksquare
\end{align*}
\end{pf}

%%%%%%%%%%%%%%%%%%%%%%%%%%%%%%%%%%
\section{Multi-indexed poly-Bernoulli numbers of general indices}
This section is devoted to the proof of Theorem \ref{Thm_multi-indexed}. 

\begin{pf1}
\rm{We} use a mathematical induction on $r$. 

For $r = 1$, this reduced to \eqref{eq_poly}. 
For general $r$, assume that (\ref{eq_multi-indexed}) holds. This means that the following equality holds.
\begin{multline} \label{eq_Lemmas}
\Biggl(\sum_{n_2 = 0}^\infty {n_2 \brace \ell_2} \frac{(-x_2 - \cdots -x_{r + 1})^{n_2}}{n_2!}\Biggr) \cdots \Biggl(\sum_{n_{r + 1} = 0}^\infty {n_{r + 1} \brace \ell_{r + 1}} \frac{(-x_{r + 1})^{n_{r + 1}}}{n_{r + 1}!}\Biggr)\\
= \sum_{m_1, \ldots, m_r = 0}^\infty (-1)^{m_1 + \cdots + m_r} \sum_{n_{11} + n_{12} = m_2} 
\cdots \sum_{n_{r - 1, 1} + \cdots + n_{r - 1, r} = m_r} 
\prod_{i = 1}^{r - 1}\binom{m_{i + 1}}{n_{i, 1}, \ldots, n_{i, i + 1}}
\\
\times {m_1 + n_{11} + \cdots + n_{r - 1, 1} \brace \ell_2} \cdots {m_r - n_{r - 1, 1} - \cdots - n_{r - 1, r - 1} \brace \ell_{r + 1}} \frac{x_2^{m_1} \cdots x_{r + 1}^{m_r}}{m_1! \cdots m_r}. 
\end{multline}

Indeed, the following holds by the definition of $\mathrm{Li}_{k_1, \ldots, k_{r}}^\sha$.
\begin{equation}\label{katei} 
\frac{\mathrm{Li}_{k_1, \ldots, k_{r}}^\sha (1 - e^{-x_1-\cdots-x_{r}}, \ldots, 1 - e^{-x_{r}})}{(1 - e^{-x_1-\cdots-x_{r}}) \cdots (1 - e^{-x_{r}})} = \sum_{\ell_1, \cdots, \ell_{r} = 0}^\infty \frac{(1 - e^{-x_1-\cdots-x_{r}})^{\ell_1} \cdots (1 - e^{-x_{r}})^{\ell_{r}}}{(\ell_1+1)^{k_1} \cdots (\ell_1 + \cdots + \ell_{r} + r)^{k_{r}}}. 
\end{equation}
Applying Lemma \ref{Lem_Stirling} to the right-hand side of \eqref{katei} gives 
\begin{align} %\label{eq_middle}
(RHS) 
&= \sum_{\ell_1, \cdots, \ell_{r} = 0}^\infty \frac{(-1)^{\ell_1 + \cdots + \ell_{r}} \ell_1! \cdots \ell_{r}!}{(\ell_1 + 1)^{k_1} \cdots (\ell_1 + \cdots + \ell_{r} + r)^{k_{r}}} \label{righthand}\\
&\times \Biggl(\sum_{n_1 = 0}^\infty {n_1 \brace \ell_1} \frac{(-x_1 -x_2 - \cdots -x_{r})^{n_1}}{n_1!}\Biggr) \Biggl(\sum_{n_2 = 0}^\infty {n_2 \brace \ell_2} \frac{(-x_2 - \cdots -x_{r})^{n_2}}{n_2!}\Biggr) \notag \\
&\times \cdots \Biggl(\sum_{n_{r} = 0}^\infty {n_{r} \brace \ell_{r}} \frac{(-x_{r})^{n_{r}}}{n_{r}!}\Biggr). \notag
\end{align}
On the other hand, by the definition of the multi-indexed poly-Bernoulli numbers \eqref{Def_multi-indexed} and
and the inductive hypothesis, the left-hand side of \eqref{katei} can be written as follows.
\begin{align}
(LHS) 
&
= \sum_{m_1, \ldots, m_r = 0}^\infty  (-1)^{m_1 + \cdots + m_r} \sum_{\ell_1, \ldots, \ell_r = 0}^{m_1 + \cdots + m_r} \frac{(-1)^{\ell_1 + \cdots + \ell_r} \ell_1! \cdots \ell_r!}{(\ell_1 + 1)^{k_1} \cdots (\ell_1 + \cdots + \ell_r + r)^{k_r}}\label{lefthand}\\
&\times
 \sum_{n_{11} + n_{12} = m_2} 
\cdots \sum_{n_{r - 1, 1} + \cdots + n_{r - 1, r} = m_r} 
\prod_{i = 1}^{r - 1}\binom{m_{i + 1}}{n_{i, 1}, \ldots, n_{i, i + 1}}
\notag\\
&\times {m_1 + n_{11} + \cdots + n_{r - 1, 1} \brace \ell_1} \cdots {m_r - n_{r - 1, 1} - \cdots - n_{r - 1, r - 1} \brace \ell_{r}} 
 \frac{x_1^{m_1} \cdots x_r^{m_r}}{m_1! \cdots m_r!}.\notag
\end{align}
Comparing the coefficients in \eqref{righthand} and \eqref{lefthand}, and setting $x_i=t_{i+1}$ and replacing $n_i$ and $\ell_i$ with $n_{i+1}$ and $\ell_{i+1}$ respectively, we obtain \eqref{eq_Lemmas}.

Now, by a similar calculation,
\begin{align*} 
&\sum_{\ell_1, \cdots, \ell_{r + 1} = 0}^\infty \frac{(1 - e^{-t_1-\cdots-t_{r + 1}})^{\ell_1} \cdots (1 - e^{-t_{r + 1}})^{\ell_{r + 1}}}{(\ell_1 + 1)^{k_1} \cdots (\ell_1 + \cdots + \ell_{r + 1} + (r + 1))^{k_{r + 1}}}\\
&= \sum_{\ell_1, \cdots, \ell_{r + 1} = 0}^\infty \frac{(-1)^{\ell_1 + \cdots + \ell_{r + 1}} \ell_1! \cdots \ell_{r + 1}!}{(\ell_1 + 1)^{k_1} \cdots (\ell_1 + \cdots + \ell_{r + 1} + (r + 1))^{k_{r + 1}}} \notag \\
&\times \Biggl(\sum_{n_1 = 0}^\infty {n_1 \brace \ell_1} \frac{(-t_1 -t_2 - \cdots -t_{r + 1})^{n_1}}{n_1!}\Biggr) \Biggl(\sum_{n_2 = 0}^\infty {n_2 \brace \ell_2} \frac{(-t_2 - \cdots -t_{r + 1})^{n_2}}{n_2!}\Biggr) \notag \\
&\times \cdots \Biggl(\sum_{n_{r + 1} = 0}^\infty {n_{r + 1} \brace \ell_{r + 1}} \frac{(-t_{r + 1})^{n_{r + 1}}}{n_{r + 1}!}\Biggr). 
\end{align*}
Using (\ref{eq_Lemmas}), this becomes 
\begin{align}
& \sum_{\ell_1, \cdots, \ell_{r + 1} = 0}^\infty \frac{(-1)^{\ell_1 + \cdots + \ell_{r + 1}} \ell_1! \cdots \ell_{r + 1}!}{(\ell_1 + 1)^{k_1} \cdots (\ell_1 + \cdots + \ell_{r + 1} + (r + 1))^{k_{r + 1}}} \sum_{n_1 = 0}^\infty \sum_{m_1, \ldots, m_r = 0}^\infty (-1)^{n_1 + m_1 + \cdots + m_r}\label{eqtransf}\\
&\quad \times \sum_{n_{11} + n_{12} = m_2} 
\cdots \sum_{n_{r - 1, 1} + \cdots + n_{r - 1, r} = m_r} 
\prod_{i = 1}^{r - 1}\binom{m_{i + 1}}{n_{i, 1}, \ldots, n_{i, i + 1}}
\notag\\
&\quad \times {n_1 \brace \ell_1} {m_1 + n_{11} + \cdots + n_{r - 1, 1} \brace \ell_2} \cdots {m_r - n_{r - 1, 1} - \cdots - n_{r - 1, r - 1} \brace \ell_{r + 1}} \notag\\
&\quad \times\frac{(t_1 + t_2 + \cdots + t_{r + 1})^{n_1} t_2^{m_1} \cdots t_{r + 1}^{m_r}}{n_1! m_1! \cdots m_r}. \notag
\end{align}
The polynomial expansion, 
\begin{align*}
&(t_1 + t_2 + \cdots + t_{r + 1})^{n_1}\\
&= \sum_{n_{r, 1} = 0}^{n_1} \sum_{n_{r, 2} = 0}^{n_1 - n_{r, 1}} \cdots \sum_{n_{r, r} = 0}^{n_1 - (n_{r, 1} + \cdots + n_{r, r - 1})} \binom{n_1}{n_{r, 1}, n_{r, 2}, \ldots, n_{r, r}, n_1 - (n_{r, 1} + \cdots + n_{r, r})}\\
&\times t_1^{n_{r, 1}} t_2^{n_{r, 2}} \cdots t_{r + 1}^{n_1 - (n_{r, 1} + \cdots + n_{r, r})}, 
\end{align*}
rewrites equation \eqref{eqtransf} as follows.
\begin{align*} 
& \sum_{\ell_1, \cdots, \ell_{r + 1} = 0}^\infty \frac{(-1)^{\ell_1 + \cdots + \ell_{r + 1}} \ell_1! \cdots \ell_{r + 1}!}{(\ell_1 + 1)^{k_1} \cdots (\ell_1 + \cdots + \ell_{r + 1} + (r + 1))^{k_{r + 1}}} \sum_{n_1 = 0}^\infty \sum_{n_{r, 1} = 0}^{n_1} \sum_{n_{r, 2} = 0}^{n_1 - n_{r, 1}} \cdots \sum_{n_{r, r} = 0}^{n_1 - (n_{r, 1} + \cdots + n_{r, r - 1})}\\
&\times \sum_{m_1, \ldots, m_r = 0}^\infty (-1)^{n_1 + m_1 + \cdots + m_r} \sum_{n_{11} + n_{12} = m_2} 
\cdots \sum_{n_{r - 1, 1} + \cdots + n_{r - 1, r} = m_r}\\
&\times 
\prod_{i = 1}^{r - 1}\binom{m_{i + 1}}{n_{i, 1}, \ldots, n_{i, i + 1}}
\binom{n_1}{n_{r, 1}, n_{r, 2}, \ldots, n_{r, r}, n_1 - (n_{r, 1} + \cdots + n_{r, r})}\\
&\times {n_1 \brace \ell_1} {m_1 + n_{11} + \cdots + n_{r - 1, 1} \brace \ell_2} \cdots {m_r - n_{r - 1, 1} - \cdots - n_{r - 1, r - 1} \brace \ell_{r + 1}} \\
&\times \frac{t_1^{n_{r, 1}} t_2^{m_1 + n_{r, 2}} \cdots t_{r + 1}^{m_r + n_1 - (n_{r, 1} + \cdots + n_{r, r})}}{n_1! m_1! \cdots m_r!}. 
\end{align*}

By Lemma \ref{Lem_sigma1}, this becomes
\begin{align*}
& \sum_{\ell_1, \cdots, \ell_{r + 1} = 0}^\infty \frac{(-1)^{\ell_1 + \cdots + \ell_{r + 1}} \ell_1! \cdots \ell_{r + 1}!}{(\ell_1 + 1)^{k_1} \cdots (\ell_1 + \cdots + \ell_{r + 1} + (r + 1))^{k_{r + 1}}} \sum_{n_{r, 1} = 0}^\infty \sum_{n_{r, 2} = 0}^\infty \cdots \sum_{n_{r, r} = 0}^\infty \sum_{n_1 = 0}^\infty \sum_{m_1, \ldots, m_r = 0}^\infty\\
&\times (-1)^{n_1 + n_{r, 1} + n_{r, 2} + \cdots + n_{r, r} + m_1 + \cdots + m_r}\\
&\times \sum_{n_{11} + n_{12} = m_2} 
\cdots \sum_{n_{r - 1, 1} + \cdots + n_{r - 1, r} = m_r}
\prod_{i = 1}^{r - 1}\binom{m_{i + 1}}{n_{i, 1}, \ldots, n_{i, i + 1}}
\binom{n_1 + n_{r, 1} + n_{r, 2} + \cdots + n_{r, r}}{n_{r, 1}, n_{r, 2}, \ldots, n_{r, r}, n_1}\\
&\times {n_1 + n_{r, 1} + n_{r, 2} + \cdots + n_{r, r} \brace \ell_1} {m_1 + n_{11} + \cdots + n_{r - 1, 1} \brace \ell_2} \cdots {m_r - n_{r - 1, 1} - \cdots - n_{r - 1, r - 1} \brace \ell_{r + 1}}\\
&\times \frac{t_1^{n_{r, 1}} t_2^{m_1 + n_{r, 2}} t_3^{m_2 + n_{r, 3}} \cdots t_{r + 1}^{m_r + n_1}}{(n_1 + n_{r, 1} + n_{r, 2} + \cdots + n_{r, r})! m_1! m_2! \cdots m_r!}. 
\end{align*}

Putting $i_2 = m_1 + n_{r, 2}$ and $\displaystyle \sum_{n_{r, 2} = 0}^\infty \sum_{i_2 = n_{r, 2}}^\infty = \sum_{i_2 = 0}^\infty \sum_{n_{r, 2} = 0}^{i_2}$ leads to 
\begin{align*}
& \sum_{\ell_1, \cdots, \ell_{r + 1} = 0}^\infty \frac{(-1)^{\ell_1 + \cdots + \ell_{r + 1}} \ell_1! \cdots \ell_{r + 1}!}{(\ell_1 + 1)^{k_1} \cdots (\ell_1 + \cdots + \ell_{r + 1} + (r + 1))^{k_{r + 1}}}\\
&\times \sum_{n_{r, 1} = 0}^\infty \sum_{i_2 = 0}^\infty \sum_{n_{r, 2} = 0}^{i_2} \cdots \sum_{n_{r, r} = 0}^\infty \sum_{n_1 = 0}^\infty \sum_{m_2 = 0}^\infty \cdots \sum_{m_r = 0}^\infty (-1)^{n_1 + i_2 + n_{r, 1} + n_{r, 3} + \cdots + n_{r, r} + m_2 + \cdots + m_r}\\
&\times \sum_{n_{11} + n_{12} = m_2} 
\cdots \sum_{n_{r - 1, 1} + \cdots + n_{r - 1, r} = m_r} 
\prod_{i = 1}^{r - 1}\binom{m_{i + 1}}{n_{i, 1}, \ldots, n_{i, i + 1}}
\binom{n_1 + n_{r, 1} + n_{r, 2} + \cdots + n_{r, r}}{n_{r, 1}, n_{r, 2}, \ldots, n_{r, r}, n_1}\\
&\times {n_1 + n_{r, 1} + n_{r, 2} + \cdots + n_{r, r} \brace \ell_1} {(i_2 - n_{r, 2}) + n_{11} + \cdots + n_{r - 1, 1} \brace \ell_2} \cdots {m_r - n_{r - 1, 1} - \cdots - n_{r - 1, r - 1} \brace \ell_{r + 1}}\\
&\times \frac{t_1^{n_{r, 1}} t_2^{i_2} t_3^{m_2 + n_{r, 3}} \cdots t_{r + 1}^{m_r + n_1}}{(n_1 + n_{r, 1} + n_{r, 2} + \cdots + n_{r, r})! (i_2 - n_{r, 2})! m_2! \cdots m_r!}. 
\end{align*}
By iterating the same operation $r$ times, \eqref{eqtransf} is now
\begin{align*} 
&= \sum_{\ell_1, \cdots, \ell_{r + 1} = 0}^\infty \frac{(-1)^{\ell_1 + \cdots + \ell_{r + 1}} \ell_1! \cdots \ell_{r + 1}!}{(\ell_1 + 1)^{k_1} \cdots (\ell_1 + \cdots + \ell_{r + 1} + (r + 1))^{k_{r + 1}}} \notag \\
&\times \sum_{n_{r, 1} = 0}^\infty \sum_{i_2 = 0}^\infty \sum_{i_3 = 0}^\infty \cdots \sum_{i_{r + 1} = 0}^\infty \sum_{n_{r, 2} = 0}^{i_2} \sum_{n_{r, 3} = 0}^{i_3} \cdots \sum_{n_{r, r} = 0}^{i_r} \sum_{n_1 = 0}^{i_{r + 1}} (-1)^{n_{r, 1} + i_2 + \cdots + i_{r + 1}} \notag \\
&\times \sum_{n_{11} + n_{12} = i_3 - n_{r, 3}} 
\cdots \sum_{n_{r - 2, 1} + \cdots + n_{r - 2, r - 1} = i_r - n_{r, r}}
\sum_{n_{r - 1, 1} + \cdots + n_{r - 1, r} = i_{r + 1} - n_1} \notag \\
&\times \binom{i_3 - n_{r, 3}}{n_{11}, n_{12}} 
\cdots \binom{i_r - n_{r, r}}{n_{r - 2, 1}, n_{r - 2, 2}, \ldots, n_{r - 2, r - 1}}
\binom{i_{r + 1} - n_1}{n_{r - 1, 1}, n_{r - 1, 2}, \ldots, n_{r - 1, r}} \\
&\times
\binom{n_1 + n_{r, 1} + n_{r, 2} + \cdots + n_{r, r}}{n_{r, 1}, n_{r, 2}, \ldots, n_{r, r}, n_1} \notag \\
&\times {n_1 + n_{r, 1} + n_{r, 2} + \cdots + n_{r, r} \brace \ell_1} {(i_2 - n_{r, 2}) + n_{11} + \cdots + n_{r - 1, 1} \brace \ell_2} \cdots \\
&\times {(i_{r + 1} - n_1) - n_{r - 1, 1} - \cdots - n_{r - 1, r - 1} \brace \ell_{r + 1}} \notag \\
&\times \frac{t_1^{n_{r, 1}} t_2^{i_2} t_3^{i_3} \cdots t_r^{i_r} t_{r + 1}^{i_{r + 1}}}{(n_1 + n_{r, 1} + n_{r, 2} + \cdots + n_{r, r})! (i_2 - n_{r, 2})! (i_3 - n_{r, 3})! \cdots (i_r - n_{r, r})! (i_{r + 1} - n_1)!}. 
\end{align*}

We note that the following holds.
\begin{align*}
& \binom{i_3 - n_{r, 3}}{n_{11}, n_{12}} 
\cdots \binom{i_r - n_{r, r}}{n_{r - 2, 1}, n_{r - 2, 2}, \ldots, n_{r - 2, r - 1}}
\binom{i_{r + 1} - n_1}{n_{r - 1, 1}, n_{r - 1, 2}, \ldots, n_{r - 1, r}} \\
& \times \binom{n_1 + n_{r, 1} + n_{r, 2} + \cdots + n_{r, r}}{n_{r, 1}, n_{r, 2}, \ldots, n_{r, r}, n_1}\\
& \times \frac{1}{(n_1 + n_{r, 1} + n_{r, 2} + \cdots + n_{r, r})! (i_2 - n_{r, 2})! (i_3 - n_{r, 3})! \cdots (i_r - n_{r, r})! (i_{r + 1} - n_1)!}\\
& = \binom{i_2}{n_{r, 2}} \binom{i_3}{n_{r, 3}, n_{11}, n_{12}} 
\cdots \binom{i_r}{n_{r, r}, n_{r - 2, 1}, n_{r - 2, 2}, \ldots, n_{r - 2, r - 1}} \binom{i_{r + 1}}{n_1, n_{r - 1, 1}, n_{r - 1, 2}, \ldots, n_{r - 1, r}}\\
& \times \frac{1}{n_{r, 1}! i_2! i_3! \cdots i_r! i_{r + 1}!}.
\end{align*}

Then, combining this with Lemma \ref{Lem_sigma2}, \eqref{eqtransf} can be rewritten as follows:
\begin{align*}
& \sum_{\ell_1, \cdots, \ell_{r + 1} = 0}^\infty \frac{(-1)^{\ell_1 + \cdots + \ell_{r + 1}} \ell_1! \cdots \ell_{r + 1}!}{(\ell_1 + 1)^{k_1} \cdots (\ell_1 + \cdots + \ell_{r + 1} + (r + 1))^{k_{r + 1}}}\\
&\times \sum_{n_{r, 1} = 0}^\infty \sum_{i_2 = 0}^\infty \sum_{i_3 = 0}^\infty \cdots \sum_{i_{r + 1} = 0}^\infty\\
&\times \sum_{n_{r, 2} = 0}^{i_2} \sum_{n_{r, 3} + n_{11} + n_{12} = i_3} 
\cdots \sum_{n_{r, r} + n_{r - 2, 1} + \cdots + n_{r - 2, r - 1} = i_r} \sum_{n_1 + n_{r - 1, 1} + \cdots + n_{r - 1, r} = i_{r + 1}}\\
&\times (-1)^{n_{r, 1} + i_2 + \cdots + i_{r + 1}}\\
&\times \binom{i_2}{n_{r, 2}} \binom{i_3}{n_{r, 3}, n_{11}, n_{12}} 
\cdots \binom{i_r}{n_{r, r}, n_{r - 2, 1}, n_{r - 2, 2}, \ldots, n_{r - 2, r - 1}} \binom{i_{r + 1}}{n_1, n_{r - 1, 1}, n_{r - 1, 2}, \ldots, n_{r - 1, r}}\\
&\times {n_1 + n_{r, 1} + n_{r, 2} + \cdots + n_{r, r} \brace \ell_1} {i_2 - n_{r, 2} + n_{11} + \cdots + n_{r - 1, 1} \brace \ell_2} \cdots \\
&\times {i_{r + 1} - n_1 - n_{r - 1, 1} - \cdots - n_{r - 1, r - 1} \brace \ell_{r + 1}}\\
&\times \frac{t_1^{n_{r, 1}} t_2^{i_2} t_3^{i_3} \cdots t_r^{i_r} t_{r + 1}^{i_{r + 1}}}{n_{r, 1}! i_2! i_3! \cdots i_r! i_{r + 1}!}. 
\end{align*}

Noting that the binomial coefficient can be rewritten as follows 
$$\displaystyle \sum_{i_2 = 0}^\infty \binom{i_2}{n_{r, 2}} = \sum_{n_{r, 2} + n_{01} = i_2} \binom{i_2}{n_{r, 2}, n_{01}}$$ 
and renumbering the indices, we obtain the following expression:
\begin{align*}
& \sum_{m_1, \ldots, m_{r + 1} = 0}^\infty \sum_{\ell_1, \cdots, \ell_{r + 1} = 0}^{m_1 + \cdots + m_{r + 1}} \frac{(-1)^{\ell_1 + \cdots + \ell_{r + 1}} \ell_1! \cdots \ell_{r + 1}!}{(\ell_1 + 1)^{k_1} \cdots (\ell_1 + \cdots + \ell_{r + 1} + (r + 1))^{k_{r + 1}}} \notag \\
&\times \sum_{n_{11} + n_{12} = m_2} \sum_{n_{21} + n_{22} + n_{23} = m_3} 
\cdots \sum_{n_{r - 1, 1} + \cdots + n_{r - 1, r - 1} + n_{r - 1, r} = m_r} \sum_{n_{r, 1} + \cdots + n_{r, r} + n_{r, r + 1} = m_{r + 1}} \notag \\
&\times (-1)^{m_1 + m_2 + \cdots + m_{r + 1}} \notag \\
&\times \binom{m_2}{n_{11}, n_{12}} \binom{m_3}{n_{21}, n_{22}, n_{23}} 
\cdots \binom{m_r}{n_{r - 1, 1}, n_{r - 1, 2}, \ldots, n_{r - 1, r - 1}, n_{r - 1, r}} \binom{m_{r + 1}}{n_{r, 1}, n_{r, 2}, \ldots, n_{r, r}, n_{r, r + 1}} \notag \\
&\times {m_1 + n_{11} + \cdots + n_{r, 1} \brace \ell_1} {m_2 - n_{11} + n_{22} + \cdots + n_{r, 2} \brace \ell_2} \cdots {m_{r + 1} - n_{r, 1} - \cdots - n_{r, r} \brace \ell_{r + 1}} \notag \\
&\times \frac{t_1^{m_1} t_2^{m_2} t_3^{m_3} \cdots t_r^{m_r} t_{r + 1}^{m_{r + 1}}}{m_1! m_2! m_3! \cdots m_r! m_{r + 1}!}. 
\end{align*} 

In the comparison of the coefficients in \eqref{Def_multi-indexed} and \eqref{katei} for $r+1$, using the above calculation, we obtain the assertion of the theorem.
\end{pf1}

%%%%%%%%%%%%%%%%%%%%%%%%%%%%%%%%%%
\section{Explicit Formula for triple-indexed poly-Bernoulli numbers}
Following the method of \cite{BNS}, we may obtain a different expression of the explicit formula. However, it can be verified by algebraic manipulation that they are equal. In this section, we show this in the case of three variables.
%%%%%%%%%%%%%%%%%%%%%%%%%%%%%%%%%%
%%%%%%%%%%%%%%%%%%%%%%%%%%%%%%%%%%

First, we show that the following theorem is obtained by following the method of \cite{BNS}.
\begin{Theorem} \label{Thm_triple-indexed_kikuchi}
For $k_1, k_2, k_3 \in {\mathbb{Z}}$ and $m_1, m_2, m_3 \in {\mathbb{Z}}_{\ge 0}$, we have 
\begin{align*}% \label{eq_multi-indexed}
{\mathbb{B}}_{m_1, m_2, m_3}^{(k_1, k_2, k_3)} &= (-1)^{m_1 + m_2 + m_3} \sum_{\ell_1, \ell_2, \ell_3 = 0}^{m_1 + m_2 + m_3} \frac{(-1)^{\ell_1 + \ell_2 + \ell_3} \ell_1! \ell_2! \ell_3!}{(\ell_1 + 1)^{k_1} (\ell_1 + \ell_2 + 2)^{k_2} (\ell_1 + \ell_2 + \ell_3 + 3)^{k_3}}\\
&\times
\sum_{n_1=0}^{m_2}
\sum_{n_2=0}^{m_3}
\sum_{n_3=0}^{m_3-m_2}
\binom{m_2}{n_1}
\binom{m_3}{n_2, n_3, m_3-n_2-n_3}\\
&\times {m_1 +n_1+n_2 \brace \ell_1}
{m_2 -n_1+n_3 \brace \ell_2}
{m_3 -n_2 -n_3 \brace \ell_3}. 
\end{align*}
\end{Theorem}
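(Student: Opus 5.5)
The plan is to follow the direct generating-function method of \cite{BNS}: expand the defining series \eqref{Def_multi-indexed} for $r=3$ without any induction, and read off the coefficient of $t_1^{m_1}t_2^{m_2}t_3^{m_3}$. Put $T_1=t_1+t_2+t_3$, $T_2=t_2+t_3$ and $T_3=t_3$. The identity \eqref{katei} with $r=3$ gives
$$
\frac{\mathrm{Li}^\sha_{k_1,k_2,k_3}(1-e^{-T_1},1-e^{-T_2},1-e^{-T_3})}{\prod_{j=1}^3(1-e^{-T_j})}
=\sum_{\ell_1,\ell_2,\ell_3\ge 0}\frac{(1-e^{-T_1})^{\ell_1}(1-e^{-T_2})^{\ell_2}(1-e^{-T_3})^{\ell_3}}{(\ell_1+1)^{k_1}(\ell_1+\ell_2+2)^{k_2}(\ell_1+\ell_2+\ell_3+3)^{k_3}} .
$$
Since $(1-e^{-T})^{\ell}=(-1)^{\ell}(e^{-T}-1)^{\ell}$, I expand each factor with the multivariable Lemma \ref{Lem_Stirling_koji} applied to the variables $-t_j$, rather than with Lemma \ref{Lem_Stirling}. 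This gives
$$
(1-e^{-T_1})^{\ell_1}=(-1)^{\ell_1}\ell_1!\sum_{a_1,a_2,a_3\ge0}{a_1+a_2+a_3\brace \ell_1}\frac{(-t_1)^{a_1}(-t_2)^{a_2}(-t_3)^{a_3}}{a_1!\,a_2!\,a_3!}.
$$
Analogously, $(1-e^{-T_2})^{\ell_2}$ expands with exponents $b_2,b_3$ and Stirling number ${b_2+b_3\brace\ell_2}$, and $(1-e^{-T_3})^{\ell_3}$ expands with exponent $c_3$ and ${c_3\brace\ell_3}$.

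Next I multiply the three expansions and collect the coefficient of $t_1^{m_1}t_2^{m_2}t_3^{m_3}/(m_1!m_2!m_3!)$. The exponent constraints are $a_1=m_1$, $a_2+b_2=m_2$ and $a_3+b_3+c_3=m_3$. I rename the free indices as $n_1=a_2$, $n_2=a_3$ and $n_3=b_3$. The remaining exponents are then $b_2=m_2-n_1$ and $c_3=m_3-n_2-n_3$.

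With these names the data combine as follows.
\begin{itemize}
\item The factorials give
$$
\frac{m_1!\,m_2!\,m_3!}{a_1!\,a_2!\,a_3!\,b_2!\,b_3!\,c_3!}=\binom{m_2}{n_1}\binom{m_3}{n_2,n_3,m_3-n_2-n_3}.
$$
\item The sign is $(-1)^{m_1+m_2+m_3}$, coming from the powers of $-t_j$.
\item The Stirling arguments become $m_1+n_1+n_2$, $m_2-n_1+n_3$ and $m_3-n_2-n_3$.
\end{itemize}
Together with the prefactor $(-1)^{\ell_1+\ell_2+\ell_3}\ell_1!\ell_2!\ell_3!$ and the denominators, this is exactly the summand of Theorem \ref{Thm_triple-indexed_kikuchi}. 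Comparing with \eqref{Def_multi-indexed} then identifies the coefficient as ${\mathbb B}^{(k_1,k_2,k_3)}_{m_1,m_2,m_3}$.

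It remains to justify the summation ranges. Each $\ell_j$ may be cut off at $m_1+m_2+m_3$, because every Stirling argument is at most $m_1+m_2+m_3$ and ${n\brace \ell}=0$ for $\ell>n$. The rearrangements are legitimate because, for fixed $(m_1,m_2,m_3)$, only finitely many terms contribute and all identities hold as formal power series.

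The main obstacle is the bookkeeping of the index ranges, in particular the stated upper limit $m_3-m_2$ for $n_3$. The natural range coming out of the computation is $n_2,n_3\ge0$ with $n_2+n_3\le m_3$, and the multinomial coefficient vanishes (by the usual convention) outside it. I would therefore read the limits of the statement as this natural range, summing $n_3$ from $0$ to $m_3-n_2$. I would take care to state that convention explicitly, so that the formula matches the form obtained in Theorem \ref{Thm_multi-indexed} with $r=3$ for the later comparison in this section.
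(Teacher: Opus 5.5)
Your proof is correct, and it takes a genuinely different and much shorter route than the paper's.

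\textbf{What the paper does.} It expands each factor $(1-e^{-T_j})^{\ell_j}$ with the one-variable Lemma~\ref{Lem_Stirling} in the single variable $-T_j$. It then forms the Cauchy product of the three series and expands $(-t_1-t_2-t_3)^{n}$ and $(-t_2-t_3)^{n}$ binomially. Reaching the formula then takes a binomial-coefficient identity, the substitution $m_4=m_1+m_3$, and a global relabelling of indices.

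\textbf{What you do differently.} You apply the multivariable Lemma~\ref{Lem_Stirling_koji}, which the paper proves but uses only in Section~5, to the variables $-t_1,-t_2,-t_3$. This absorbs the multinomial expansion into the Stirling generating function, so coefficient extraction becomes a one-line computation. Your checks are all right: the factorials, the sign $(-1)^{m_1+m_2+m_3}$, the three Stirling arguments, and the truncation of the $\ell_j$ at $m_1+m_2+m_3$. Your reading of the upper limit $m_3-m_2$ as $m_3-n_2$ is also correct, since the paper's own computation ends with $\sum_{n_3=0}^{m_3-n_2}$; the stated limit is a misprint.

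\textbf{What your route buys.} It covers more than the triple case. Write $a_{j,v}$ for the exponent of $-t_v$ in the $j$-th factor. The column constraints $\sum_{j\le v}a_{j,v}=m_v$ give the multinomials $\binom{m_v}{a_{1,v},\ldots,a_{v,v}}$, and the Stirling arguments are $\sum_{v\ge j}a_{j,v}$. Setting $n_{v-1,j}=a_{j,v}$, this is exactly Theorem~\ref{Thm_multi-indexed}, so the same computation proves the general formula without the induction of Section~3. It also shows that Theorems~\ref{Thm_triple-indexed_kikuchi} and~\ref{Thm_triple-indexed} are literally the same formula under $n_1=n_{11}$, $n_2=n_{21}$, $n_3=n_{22}$.

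\textbf{What the paper's route buys.} It deliberately reproduces the one-variable computation of the double-indexed case. Its index bookkeeping is what the authors say seems hard to generalize, and your choice of lemma removes that bookkeeping entirely.
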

%%%%%%%%%%%%%%%%%%%%%%%%%%%%%%%%%%
\begin{pf}
\rm{Since} the same argument applies up to (3.3) in the previous section, we can write the following.
\begin{align}
&\frac{\mathrm{Li}_{k_1, k_2, k_3}^\sha (1 - e^{-t_1-t_2-t_3}, 1 - e^{-t_2-t_3}, 1 - e^{-t_3})}{(1 - e^{-t_1-t_2-t_3})(1 - e^{-t_2-t_3})(1 - e^{-t_3})}\label{same} \\
&= \sum_{\ell_1, \ell_2, \ell_3 = 0}^\infty \frac{(-1)^{\ell_1 + \ell_2 + \ell_3} \ell_1! \ell_2! \ell_3!}{(\ell_1 + 1)^{k_1} (\ell_1 + \ell_2 + 2)^{k_2} (\ell_1 + \ell_2 + \ell_3 + 3)^{k_3}} \notag \\
&\qquad \times \Biggl(\sum_{n_1 = 0}^\infty {n_1 \brace \ell_1} \frac{(-t_1-t_2-t_3)^{n_1}}{n_1!}\Biggr) \Biggl(\sum_{n_2 = 0}^\infty {n_2 \brace \ell_2} \frac{(-t_2-t_3)^{n_2}}{n_2!}\Biggr) \Biggl(\sum_{n_3 = 0}^\infty {n_3 \brace \ell_3} \frac{(-t_3)^{n_3}}{n_3!}\Biggr). \notag
\end{align}
\rm{By} computing the product of formal power series, we obtain the following expression:
\begin{align*}
&= \sum_{\ell_1, \ell_2, \ell_3 = 0}^\infty \frac{(-1)^{\ell_1 + \ell_2 + \ell_3} \ell_1! \ell_2! \ell_3!}{(\ell_1 + 1)^{k_1} (\ell_1 + \ell_2 + 2)^{k_2} (\ell_1 + \ell_2 + \ell_3 + 3)^{k_3}} \notag \\
&\qquad \times \Biggl(\sum_{n_1 = 0}^\infty {n_1 \brace \ell_1} \frac{(-t_1-t_2-t_3)^{n_1}}{n_1!}\Biggr) \Biggl(\sum_{n_2 = 0}^\infty \sum_{n_3 = 0}^{n_2} {n_3 \brace \ell_2} {n_2 - n_3 \brace \ell_3} \frac{(-t_2-t_3)^{n_3} (-t_3)^{n_2 - n_3}}{n_3! (n_2 - n_3)!}\Biggr) \notag \\
&= \sum_{\ell_1, \ell_2, \ell_3 = 0}^\infty \frac{(-1)^{\ell_1 + \ell_2 + \ell_3} \ell_1! \ell_2! \ell_3!}{(\ell_1 + 1)^{k_1} (\ell_1 + \ell_2 + 2)^{k_2} (\ell_1 + \ell_2 + \ell_3 + 3)^{k_3}} \notag \\
&\qquad \times \sum_{n_1 = 0}^\infty \sum_{n_2 = 0}^{n_1} \sum_{n_3 = 0}^{n_1 - n_2} {n_2 \brace \ell_1} {n_3 \brace \ell_2} {n_1 - n_2 - n_3 \brace \ell_3} \frac{(-t_1-t_2-t_3)^{n_2} (-t_2-t_3)^{n_3} (-t_3)^{n_1 - n_2 - n_3}}{n_2! n_3! (n_1 - n_2 - n_3)!}. \notag \\
\end{align*}
\rm{The} polynomial expansion gives
\begin{align*}
&= \sum_{\ell_1, \ell_2, \ell_3 = 0}^\infty \frac{(-1)^{\ell_1 + \ell_2 + \ell_3} \ell_1! \ell_2! \ell_3!}{(\ell_1 + 1)^{k_1} (\ell_1 + \ell_2 + 2)^{k_2} (\ell_1 + \ell_2 + \ell_3 + 3)^{k_3}} \notag \\
&\qquad \times \sum_{n_1 = 0}^\infty \sum_{n_2 = 0}^{n_1} \sum_{n_3 = 0}^{n_1 - n_2} {n_2 \brace \ell_1} {n_3 \brace \ell_2} {n_1 - n_2 - n_3 \brace \ell_3}\notag \\
&\qquad \quad \times \sum_{m_1 = 0}^{n_2} \sum_{m_2 = 0}^{m_1} \sum_{m_3 = 0}^{n_3} \binom{n_2}{m_1} \binom{m_1}{m_2} \binom{n_3}{m_3} \frac{(-t_1)^{m_2} (-t_2)^{m_1 - m_2 + m_3} (-t_3)^{n_1 - m_1 - m_3}}{n_2! n_3! (n_1 - n_2 - n_3)!}. \notag 
\end{align*}
\rm{Recalling} the identity for binomial coefficients, we obtain the following
%\begin{multline*}
%\binom{n_2}{m_1} \binom{m_1}{m_2} \binom{n_3}{m_3} \frac{1}{n_2! n_3! (n_1 - n_2 - n_3)!}\\
%= \binom{m_1 - m_2 + m_3}{m_3} \binom{n_1 - m_1 - m_3}{n_2 - m_1, n_3 - m_3, n_1 - n_2 - n_3} \frac{1}{m_2! (m_1 - m_2 + m_3)! (n_1 - m_1 - m_3)!}, 
%\end{multline*}
%we have 
\begin{align*}
&\frac{\mathrm{Li}_{k_1, k_2, k_3}^\sha (1 - e^{-t_1-t_2-t_3}, 1 - e^{-t_2-t_3}, 1 - e^{-t_3})}{(1 - e^{-t_1-t_2-t_3})(1 - e^{-t_2-t_3})(1 - e^{-t_3})}\notag\\
%&= \sum_{\ell_1, \ell_2, \ell_3 = 0}^\infty \frac{(-1)^{\ell_1 + \ell_2 + \ell_3} \ell_1! \ell_2! \ell_3!}{(\ell_1 + 1)^{k_1} (\ell_1 + \ell_2 + 2)^{k_2} (\ell_1 + \ell_2 + \ell_3 + 3)^{k_3}}\\
%&\qquad \times \sum_{n_1 = 0}^\infty \sum_{n_2 = 0}^{n_1} \sum_{n_3 = 0}^{n_1 - n_2} \sum_{m_1 = 0}^{n_2} \sum_{m_2 = 0}^{m_1} \sum_{m_3 = 0}^{n_3} {n_2 \brace \ell_1} {n_3 \brace \ell_2} {n_1 - n_2 - n_3 \brace \ell_3} \binom{m_1 - m_2 + m_3}{m_3}\\
%&\qquad \quad \times \binom{n_1 - m_1 - m_3}{n_2 - m_1, n_3 - m_3, n_1 - n_2 - n_3} \frac{(-t_1)^{m_2} (-t_2)^{m_1 - m_2 + m_3} (-t_3)^{n_1 - m_1 - m_3}}{m_2! (m_1 - m_2 + m_3)! (n_1 - m_1 - m_3)!}\\
&= \sum_{\ell_1, \ell_2, \ell_3 = 0}^\infty \frac{(-1)^{\ell_1 + \ell_2 + \ell_3} \ell_1! \ell_2! \ell_3!}{(\ell_1 + 1)^{k_1} (\ell_1 + \ell_2 + 2)^{k_2} (\ell_1 + \ell_2 + \ell_3 + 3)^{k_3}}\\
&\qquad \times \sum_{m_2 = 0}^\infty \sum_{n_1 = 0}^\infty \sum_{m_1 = 0}^{n_1} \sum_{m_3 = 0}^{n_1 - m_1} \sum_{n_2 = m_1}^{n_1 - m_3} \sum_{n_3 = m_3}^{n_1 - n_2} {n_2 + m_2 \brace \ell_1} {n_3 \brace \ell_2} {n_1 - n_2 - n_3 \brace \ell_3} \\
&\qquad \times \binom{m_1 + m_3}{m_3}\binom{n_1 - m_1 - m_3}{n_2 - m_1, n_3 - m_3, n_1 - n_2 - n_3} \frac{(-t_1)^{m_2} (-t_2)^{m_1 + m_3} (-t_3)^{n_1 - m_1 - m_3}}{m_2! (m_1 + m_3)! (n_1 - m_1 - m_3)!}. 
\end{align*}
\rm{Putting} $m_4 = m_1 + m_3$ leads to 
\begin{align*}
&(RHS)\\
&= \sum_{\ell_1, \ell_2, \ell_3 = 0}^\infty \frac{(-1)^{\ell_1 + \ell_2 + \ell_3} \ell_1! \ell_2! \ell_3!}{(\ell_1 + 1)^{k_1} (\ell_1 + \ell_2 + 2)^{k_2} (\ell_1 + \ell_2 + \ell_3 + 3)^{k_3}}\\
&\qquad \times \sum_{m_2 = 0}^\infty \sum_{n_1 = 0}^\infty \sum_{m_1 = 0}^{n_1} \sum_{m_4 = m_1}^{n_1 - m_1} \sum_{n_2 = m_1}^{n_1 - m_4 +m_1} \sum_{n_3 = m_4 - m_1}^{n_1 - n_2} {n_2 + m_2 \brace \ell_1} {n_3 \brace \ell_2} {n_1 - n_2 - n_3 \brace \ell_3}\\
&\qquad \quad \times \binom{m_4}{m_1} \binom{n_1 - m_4}{n_2 - m_1, n_3 - m_4 + m_1, n_1 - n_2 - n_3} \frac{(-t_1)^{m_2} (-t_2)^{m_4} (-t_3)^{n_1 - m_4}}{m_2! m_4! (n_1 - m_4)!}\\
%&= \sum_{m_2 = 0}^\infty \sum_{m_4 = 0}^\infty \sum_{n_1 = 0}^\infty \sum_{\ell_1, \ell_2, \ell_3 = 0}^\infty \frac{(-1)^{\ell_1 + \ell_2 + \ell_3 + m_2 + m_4 + n_1} \ell_1! \ell_2! \ell_3!}{(\ell_1 + 1)^{k_1} (\ell_1 + \ell_2 + 2)^{k_2} (\ell_1 + \ell_2 + \ell_3 + 3)^{k_3}}\\
%&\qquad \times \sum_{m_1 = 0}^{m_4} \sum_{n_2 = 0}^{n_1} \sum_{n_3 = 0}^{n_1 - n_2} {m_2 + m_1 + n_2 \brace \ell_1} {m_4 - m_1 + n_3 \brace \ell_2} {n_1 - n_2 - n_3 \brace \ell_3}\\
%&\qquad \quad \times \binom{m_4}{m_1} \binom{n_1}{n_2, n_3, n_1 - n_2 - n_3} \frac{t_1^{m_2} t_2^{m_4} t_3^{n_1}}{m_2! m_4! n_1!}. 
&= \sum_{m_2 = 0}^\infty \sum_{m_4 = 0}^\infty \sum_{n_1 = 0}^\infty \sum_{\ell_1, \ell_2, \ell_3 = 0}^{m_2 + m_4 + n_1} \frac{(-1)^{\ell_1 + \ell_2 + \ell_3 + m_2 + m_4 + n_1} \ell_1! \ell_2! \ell_3!}{(\ell_1 + 1)^{k_1} (\ell_1 + \ell_2 + 2)^{k_2} (\ell_1 + \ell_2 + \ell_3 + 3)^{k_3}}\\
&\times \sum_{m_1 = 0}^{m_4} \sum_{n_2 = 0}^{n_1} \sum_{n_3 = 0}^{n_1 - n_2} {m_2 + m_1 + n_2 \brace \ell_1} {m_4 - m_1 + n_3 \brace \ell_2} {n_1 - n_2 - n_3 \brace \ell_3}\\
&\times \binom{m_4}{m_1} \binom{n_1}{n_2, n_3, n_1 - n_2 - n_3} \frac{t_1^{m_2} t_2^{m_4} t_3^{n_1}}{m_2! m_4! n_1!}. 
\end{align*}
\rm{Here}, we used the fact that for $\ell > m$, we have ${m \brace \ell} = 0$. 
Now, putting $m_1 \mapsto n_1, m_2 \mapsto m_1, m_4 \mapsto m_2, n_1 \mapsto m_3$ leads to 
\begin{align*} %\label{RHS2} 
(RHS) &= \sum_{m_1 = 0}^\infty \sum_{m_2 = 0}^\infty \sum_{m_3 = 0}^\infty \sum_{\ell_1, \ell_2, \ell_3 = 0}^{m_1 + m_2 + m_3} \frac{(-1)^{\ell_1 + \ell_2 + \ell_3 + m_1 + m_2 + m_3} \ell_1! \ell_2! \ell_3!}{(\ell_1 + 1)^{k_1} (\ell_1 + \ell_2 + 2)^{k_2} (\ell_1 + \ell_2 + \ell_3 + 3)^{k_3}}\\
&\times \sum_{n_1 = 0}^{m_2} \sum_{n_2 = 0}^{m_3} \sum_{n_3 = 0}^{m_3 - n_2} {m_1 + n_1 + n_2 \brace \ell_1} {m_2 - n_1 + n_3 \brace \ell_2} {m_3 - n_2 - n_3 \brace \ell_3}\\
&\times \binom{m_2}{n_1} \binom{m_3}{n_2, n_3, m_3 - n_2 - n_3} \frac{t_1^{m_1} t_2^{m_2} t_3^{m_3}}{m_1! m_2! m_3!}. 
\end{align*}
\rm{In} the comparison of the coefficients in \eqref{Def_multi-indexed} and \eqref{katei} for $r=3$, using the above calculation, we obtain the assertion of the theorem.\;\;$\blacksquare$
\end{pf}

%%%%%%%%%%%%%%%%%%%%%%%%%%%%%%%%%%
%%%%%%%%%%%%%%%%%%%%%%%%%%%%%%%%%%
In the case of three variables, Theorem \ref{Thm_multi-indexed} can be expressed as follows.
\begin{Theorem} \label{Thm_triple-indexed}
For $k_1, \ldots, k_r \in {\mathbb{Z}}$ and $m_1, \ldots, m_r \in {\mathbb{Z}}_{\ge 0}$, we have 
\begin{align*} %\label{eq_multi-indexed}
{\mathbb{B}}_{m_1, m_2, m_3}^{(k_1, k_2, k_3)} &= (-1)^{m_1 + m_2 + m_3} \sum_{\ell_1, \ell_2, \ell_3 = 0}^{m_1 + m_2 + m_3} \frac{(-1)^{\ell_1 + \ell_2 + \ell_3} \ell_1! \ell_2! \ell_3!}{(\ell_1 + 1)^{k_1} (\ell_1 + \ell_2 + 2)^{k_2} (\ell_1 + \ell_2 + \ell_3 + 3)^{k_3}}\\
&\times
\sum_{n_{11}+n_{12}=m_2}
\sum_{n_{21}+n_{22}+n_{23}=m_3}
\binom{m_2}{n_{11}, n_{12}}
\binom{m_3}{n_{21}, n_{22}, n_{23}}\\
& \times {m_1 +n_{11}+n_{21} \brace \ell_1}
{m_2 -n_{11}+n_{22} \brace \ell_2}
{m_3 -n_{21} -n_{22} \brace \ell_3}. 
\end{align*}
\end{Theorem}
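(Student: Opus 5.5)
Theorem \ref{Thm_triple-indexed} is the specialization $r=3$ of Theorem \ref{Thm_multi-indexed}, so the plan is to substitute $r=3$ into \eqref{eq_multi-indexed} and check that every factor matches. Since the section's purpose is to compare with Theorem \ref{Thm_triple-indexed_kikuchi}, I will also indicate briefly why the two triple formulas agree.

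First, with $r=3$ the prefactor and the $\ell$-sum become $(-1)^{m_1+m_2+m_3}\sum_{\ell_1,\ell_2,\ell_3=0}^{m_1+m_2+m_3}$, and the denominators are $(\ell_1+1)^{k_1}(\ell_1+\ell_2+2)^{k_2}(\ell_1+\ell_2+\ell_3+3)^{k_3}$. The constrained inner sum runs over $i=1,2$ only, which gives
$n_{11}+n_{12}=m_2$ and $n_{21}+n_{22}+n_{23}=m_3$.
The product of multinomial coefficients is $\binom{m_2}{n_{11},n_{12}}\binom{m_3}{n_{21},n_{22},n_{23}}$.

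Next I evaluate the Stirling arguments $m_j-\sum_{i=1}^{j-1}n_{j-1,i}+\sum_{i=j}^{r-1}n_{i,j}$ for $j=1,2,3$:
\begin{itemize}
\item for $j=1$ the first sum is empty, giving $m_1+n_{11}+n_{21}$;
\item for $j=2$ one gets $m_2-n_{11}+n_{22}$;
\item for $j=3$ the second sum is empty, giving $m_3-n_{21}-n_{22}$.
\end{itemize}
These agree with the statement, and this bookkeeping of indices is the only step that needs care. The general formula as typeset has the first subtracted index running over $n_{j-1,i}$ rather than $n_{i,j-1}$-type indices. I will therefore read the $j=2$ and $j=3$ entries off the last display of the proof of Theorem \ref{Thm_multi-indexed}, namely ${m_2-n_{11}+n_{22}+\cdots\brace\ell_2}$ and ${m_{r+1}-n_{r,1}-\cdots-n_{r,r}\brace\ell_{r+1}}$, with $r+1=3$. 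This is the intended meaning, and it confirms the three arguments above.

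Finally, for consistency with Theorem \ref{Thm_triple-indexed_kikuchi}, I rename $n_{11}=n_1$, $n_{21}=n_2$ and $n_{22}=n_3$. Then $n_{12}=m_2-n_1$ and $n_{23}=m_3-n_2-n_3$ are determined by the constraints, and the multinomials become $\binom{m_2}{n_1}\binom{m_3}{n_2,n_3,m_3-n_2-n_3}$. The summation ranges become $0\le n_1\le m_2$, $0\le n_2\le m_3$ and $0\le n_3\le m_3-n_2$; this reading of the upper limit on $n_3$ is the one used in the proof of that theorem. So the two triple-indexed expressions coincide term by term.
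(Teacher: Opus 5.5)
Your proposal is correct and matches the paper, which obtains this statement simply as the $r=3$ specialization of Theorem \ref{Thm_multi-indexed}; your evaluation of the Stirling arguments, the multinomials, and the term-by-term identification with Theorem \ref{Thm_triple-indexed_kikuchi} are all right. One small remark: the typeset index $n_{j-1,i}$ in \eqref{eq_multi-indexed} is already the correct one (for $j=3$ it gives $n_{21}+n_{22}$ directly), so your detour through the last display of the proof is harmless but not needed.
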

By applying a transformation of the binomial coefficients, it can be verified that this is equal to Theorem \ref{Thm_triple-indexed_kikuchi}. Note that it seems difficult to generalize this method of computation following the approach of \cite{BNS}.
%%%%%%%%%%%%%%%%%%%%%%%%%%%%%%%%%%
\section{Duality}
In this section, we give an alternative proof of Theorem \ref{Thm_dual_KanekoTsumura}.
For this purpose, we consider the following generating function,
$$
\sum_{k_1, \ldots, k_r=0}^{\infty}
\sum_{m_1, \ldots, m_r=0}^{\infty}
{\mathbb B}_{m_1, \ldots, m_r}^{(-k_1, \ldots, -k_r)}
\frac{x_1^{m_1}\cdots x_r^{m_r}y_1^{k_1}\cdots y_r^{k_r}}{m_1! \cdots m_r!k_1!\cdots k_r!}.
$$
and following expression.
\begin{Theorem} \label{generatingexp}
For $k_1, \ldots, k_r \in {\mathbb{Z}}$ and $m_1, \ldots, m_r \in {\mathbb{Z}}_{\ge 0}$, we have 
\begin{align} %\label{eq_multi-indexed}
&\sum_{k_1, \ldots, k_r=0}^{\infty}
\sum_{m_1, \ldots, m_r=0}^{\infty}
{\mathbb B}_{m_1, \ldots, m_r}^{(-k_1, \ldots, -k_r)}
\frac{x_1^{m_1}\cdots x_r^{m_r}y_1^{k_1}\cdots y_r^{k_r}}{m_1! \cdots m_r!k_1!\cdots k_r!}\label{exp}\\
&=
\frac{e^{x_1+x_2+\cdots+x_r+y_1+y_2+\cdots+y_r}}{
e^{x_1+x_2+\cdots +x_r}-e^{x_1+x_2+\cdots +x_r+y_1+y_2+\cdots+y_r}+e^{y_1+y_2+\cdots+y_r}}\notag\\
&\times
\frac{e^{x_2+\cdots+x_r+y_2+\cdots+y_r}}{
e^{x_2+\cdots +x_r}-e^{x_2+\cdots +x_r+y_2+\cdots+y_r}+e^{y_2+\cdots+y_r}}
\times
\cdots
\times
\frac{e^{x_r+y_r}}{
e^{x_r}-e^{x_r+y_r}+e^{y_r}}.\notag
\end{align}
\end{Theorem}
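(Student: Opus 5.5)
The plan is to compute the double generating function directly from the definition \eqref{Def_multi-indexed}, summing over $k_1,\ldots,k_r$ first, rather than going through the explicit formula of Theorem \ref{Thm_multi-indexed}. Write $X_j=x_j+\cdots+x_r$ and $Y_j=y_j+\cdots+y_r$. For $s_j=-k_j$, I would expand $\mathrm{Li}^\sha$ exactly as in \eqref{katei}, with $t_j$ replaced by $x_j$. This gives the $x$-generating function of $\mathbb{B}^{(-k_1,\ldots,-k_r)}_{m_1,\ldots,m_r}$ in the form
\begin{align*}
\sum_{\ell_1,\ldots,\ell_r=0}^\infty \prod_{j=1}^r (1-e^{-X_j})^{\ell_j}\,(\ell_1+\cdots+\ell_j+j)^{k_j}.
\end{align*}

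Next, multiply by $y_1^{k_1}\cdots y_r^{k_r}/(k_1!\cdots k_r!)$ and sum over $k_j\ge 0$. Each factor $(\ell_1+\cdots+\ell_j+j)^{k_j}$ becomes $e^{y_j(\ell_1+\cdots+\ell_j+j)}$. Regrouping the exponent by summation by parts gives
\begin{align*}
\sum_{j=1}^r y_j(\ell_1+\cdots+\ell_j+j)=\sum_{i=1}^r(\ell_i+1)Y_i.
\end{align*}
So the whole series factors as a product over $i$ of independent geometric series:
\begin{align*}
\prod_{i=1}^r e^{Y_i}\sum_{\ell_i=0}^\infty\bigl((1-e^{-X_i})e^{Y_i}\bigr)^{\ell_i}
=\prod_{i=1}^r\frac{e^{Y_i}}{1-(1-e^{-X_i})e^{Y_i}}.
\end{align*}
Multiplying the numerator and denominator of the $i$-th factor by $e^{X_i}$ turns it into $e^{X_i+Y_i}/(e^{X_i}-e^{X_i+Y_i}+e^{Y_i})$. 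This is exactly the right-hand side of \eqref{exp}.

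The main obstacle is justifying the interchange of the sums over $k$, $\ell$ and $m$ in the ring of formal power series in $x,y$. My plan is to note that $1-e^{-X_i}$ has no constant term in the $x$-variables. Hence for fixed $m_1,\ldots,m_r$, only the terms with $\ell_1+\cdots+\ell_r\le m_1+\cdots+m_r$ contribute to the coefficient of $x^{m}$; this is the same truncation $\ell_j\le m_1+\cdots+m_r$ that appears in Theorem \ref{Thm_multi-indexed}. Consequently, for each fixed monomial $x^m y^k$, all rearrangements involve only finite sums. Likewise, the geometric series $\sum_{\ell_i}\bigl((1-e^{-X_i})e^{Y_i}\bigr)^{\ell_i}$ converges $x$-adically, and its sum is the formal inverse of $1-(1-e^{-X_i})e^{Y_i}$, which has constant term $1$.

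Finally, the resulting expression is visibly symmetric under $x_j\leftrightarrow y_j$, which is what Section 5 will use to deduce Theorem \ref{Thm_dual_KanekoTsumura}.
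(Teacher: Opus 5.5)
Your proof is correct. Its second half is the same as the paper's: summing over $k_j$ to get $e^{y_j(\ell_1+\cdots+\ell_j+j)}$, regrouping the exponent as $\sum_i(\ell_i+1)Y_i$, summing $r$ geometric series, and clearing denominators by $e^{X_i}$.

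The difference lies in how you reach
\[
\sum_{\ell}\prod_j(1-e^{-X_j})^{\ell_j}(\ell_1+\cdots+\ell_j+j)^{k_j}.
\]
You read it off directly from the expansion \eqref{katei} of the definition. The paper instead starts from the explicit formula of Theorem \ref{Thm_multi-indexed}. It interchanges the sums over the $m_j$ and the multinomial indices $n_{ij}$ step by step, then uses Lemma \ref{Lem_Stirling_koji} to collapse the Stirling-number sums into $\prod_j (e^{-X_j}-1)^{\ell_j}/\ell_j!$. After multiplying by $(-1)^{\ell_j}\ell_j!$, this is exactly your product $\prod_j(1-e^{-X_j})^{\ell_j}$. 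So the paper's first stage in effect rebuilds the series \eqref{katei} from which Theorem \ref{Thm_multi-indexed} was itself derived in Section 3.

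As a proof of the generating-function identity, your route is considerably shorter. It needs no Stirling numbers and none of the $n_{ij}$ bookkeeping. You also state the formal-convergence justification that the paper leaves implicit:
\begin{itemize}
\item only $|\ell|\le|m|$ contributes to the coefficient of $x^m$;
\item $1-(1-e^{-X_i})e^{Y_i}$ is invertible because it has constant term $1$.
\end{itemize}

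What the paper's detour buys is its stated aim: deriving the duality \emph{from} the explicit formula. It shows that the numbers defined by the right-hand side of \eqref{eq_multi-indexed} have this generating function, which is an application of Theorem \ref{Thm_multi-indexed} rather than a more efficient proof.
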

\begin{pf}
\rm{From} Theorem  \ref{Thm_multi-indexed}, the left hand side of \eqref{exp} can be written by
\begin{align}
&\sum_{k_1, \ldots, k_r=0}^{\infty}
\sum_{m_1, \ldots, m_r=0}^{\infty}
(-1)^{m_1 + \cdots + m_r} \sum_{\ell_1, \ldots, \ell_r = 0}^{m_1 + \cdots + m_r} (-1)^{\ell_1 + \cdots + \ell_r} \ell_1! \cdots \ell_r!(\ell_1 + 1)^{k_1} \cdots (\ell_1 + \cdots + \ell_r + r)^{k_r}\label{sumsum}\\
&\times
\sum_{\substack{{n_{11} + n_{12} = m_2} \\ \vdots \\
{n_{r - 1, 1} + \cdots + n_{r - 1, r} = m_r} }}
\prod_{i = 1}^{r - 1}\binom{m_{i + 1}}{n_{i, 1}, \ldots, n_{i, i + 1}}
% \prod_{i = 1}^{r - 1} \Biggl( \sum_{n_{i, 1} + \cdots + n_{i, i + 1} = m_{i + 1}} \binom{m_{i + 1}}{n_{i, 1}, \ldots, n_{i, i + 1}} \Biggr) 
\times \prod_{j = 1}^r {m_j - \sum_{i = 1}^{j - 1} n_{j - 1, i} + \sum_{i = j}^{r - 1} n_{i, j} \brace \ell_j}\\
&\times
\frac{x_1^{m_1}\cdots x_r^{m_r}y_1^{k_1}\cdots y_r^{k_r}}{m_1! \cdots m_r!k_1!\cdots k_r!}\notag\\
=&\sum_{k_1, \ldots, k_r=0}^{\infty}
\sum_{m_1, \ldots, m_r=0}^{\infty}
(-1)^{m_1 + \cdots + m_r} \sum_{\ell_1, \ldots, \ell_r = 0}^{m_1 + \cdots + m_r} (-1)^{\ell_1 + \cdots + \ell_r} \ell_1! \cdots \ell_r!(\ell_1 + 1)^{k_1} \cdots (\ell_1 + \cdots + \ell_r + r)^{k_r}\notag\\
&\times
\sum_{n_{11}=0}^{m_2}\sum_{n_{21}=0}^{m_3}\sum_{n_{22}=0}^{m_3-n_{21}}
\cdots \sum_{n_{r-1,1}=0}^{m_r}\sum_{n_{r-1,2}=0}^{m_2-n_{r-1, 1}}\cdots
\sum_{n_{r-1,r}=0}^{m_r-(n_{r-1, 1}+\cdots +n_{r-1, r-1})}\notag\\
&\times
\prod_{i = 1}^{r - 1}\binom{m_{i + 1}}{n_{i, 1}, \ldots, n_{i, i + 1}}
\times \prod_{j = 1}^r {m_j - \sum_{i = 1}^{j - 1} n_{j - 1, i} + \sum_{i = j}^{r - 1} n_{i, j} \brace \ell_j}
\frac{x_1^{m_1}\cdots x_r^{m_r}y_1^{k_1}\cdots y_r^{k_r}}{m_1! \cdots m_r!k_1!\cdots k_r!}.\notag
\end{align}
First, we consider the sum with respect to $m_r$:
\begin{align*}
&
\sum_{m_r=0}^{\infty}
(-1)^{m_r} 
\sum_{n_{r-1,1}=0}^{m_r}\sum_{n_{r-1,2}=0}^{m_2-n_{r-1, 1}}\cdots
\sum_{n_{r-1,r}=0}^{m_r-(n_{r-1, 1}+\cdots +n_{r-1, r-1})}\\
&\times
\binom{m_{r}}{n_{r-1, 1}, \ldots, m_r-(n_{r-1, 1}+\cdots + n_{r-1, r-1})}
 {m_r - n_{r - 1, 1}-\cdots n_{r-1, r-1} \brace \ell_r}
\frac{x_r^{m_r}}{m_r!}.
\end{align*}
By interchanging the order of summation $\displaystyle{\sum_{m_r=0}^{\infty}\sum_{n_{r-1, 1=0}}^{m_r}=\sum_{n_{r-1, 1}=0}^{\infty}\sum_{m_r=n_{r-1, 1}}^{\infty}}$, this can be rewritten as follows.
\begin{align*}
&
\sum_{n_{r-1,1}=0}^{\infty}
\sum_{m_r=0}^{\infty}
\sum_{n_{r-1,2}=0}^{m_2-n_{r-1,1}}\cdots
\sum_{n_{r-1,r}=0}^{m_r-(n_{r-1, 2}+\cdots +n_{r-1, r-1})}(-1)^{m_r+n_{r-1, 1}}
\\
&\times
 {m_r - n_{r - 1, 2}-\cdots n_{r-1, r-1} \brace \ell_r}
\frac{x_r^{m_r+n_{r-1, 1}}}{n_{r-1, 1}! n_{r-1, 2}! \cdots n_{r-1, r-1}! (m_r-(n_{r-1, 2}+\cdots +n_{r-1, r-1}))!}.
\end{align*} 
%%%%%%%
Repeating this procedure, we arrive at the following expression.
\begin{align*}
&
\sum_{n_{r-1,1}=0}^{\infty}
\sum_{n_{r-1,2}=0}^{\infty}
\cdots
\sum_{n_{r-1,r-1}=0}^{\infty}
\sum_{m_r=0}^{\infty}
 {m_r \brace \ell_r}
\frac{(-x_r)^{m_r+n_{r-1, 1}+n_{r-1, 2}+\cdots + n_{r-1, r-1}}}{n_{r-1, 1}! n_{r-1, 2}! \cdots n_{r-1, r-1}! m_r!}.
\end{align*}
We apply this operation from $m_{r-1}$ to $m_2$.
For example, the cases of $m_3$ and $m_2$ become as follows, respectively:
\begin{align*}
&
\sum_{m_3=0}^{\infty}
\sum_{n_{21}=0}^{m_3}
\sum_{n_{22}=0}^{m_3-n_{21}}
(-1)^{m_3}
\binom{m_{3}}{n_{21}, n_{22}, m_3-n_{21}-n_{22}}
 {m_3-n_{21}-n_{22}+n_{33}+\cdots +n_{r-1, 3} \brace \ell_3}
 \\
&
\times \frac{x_3^{m_3}}{m_3!} \sum_{n_{21}=0}^{\infty}
\sum_{n_{22}=0}^{\infty}
\sum_{m_3=0}^{\infty}
{m_3+n_{33}+\cdots + n_{r-1, 3} \brace \ell_3}
\frac{(-x_3)^{m_3+n_{21}+n_{22}}}{n_{21}! n_{22}! m_3!}
\end{align*}
and
\begin{align*}
&
\sum_{m_2=0}^{\infty}
\sum_{n_{11}=0}^{m_2}
(-1)^{m_2}
\binom{m_{2}}{n_{11}, m_2-n_{11}}
 {m_2-n_{11}+n_{22}+\cdots +n_{r-1, 2} \brace \ell_2}
 \frac{x_2^{m_2}}{m_2!}\\
&
\times \sum_{n_{11}=0}^{\infty}
\sum_{m_2=0}^{\infty}
{m_2+n_{22}+\cdots + n_{r-1, 2} \brace \ell_2}
\frac{(-x_2)^{m_2+n_{11}}}{n_{11}! m_2!}.
\end{align*}
Then, we have
\begin{align*}
&
\sum_{m_1, \ldots, m_r=0}^{\infty}
\sum_{n_{11}=0}^{m_2}
\sum_{n_{21}=0}^{m_3}
\sum_{n_{22}=0}^{m_3-n_{21}}
\cdots
\sum_{n_{r-1,1}=0}^{m_r}
\sum_{n_{r-1,2}=0}^{m_r-n_{r-1, 1}}
\cdots
\sum_{n_{r-1,r-1}=0}^{m_r-(n_{r-1, 1+\cdots + n_{r-1, r-2}})}\\
&\times
(-1)^{m_1+\cdots +m_r}
\binom{m_{2}}{n_{11}, m_2-n_{11}}
\binom{m_{3}}{n_{21}, n_{22}, m_3-n_{21}-n_{22}}
\cdots \\
&\times \binom{m_{r}}{n_{r-1,1}, \cdots, m_r-(n_{r-1,1}+\cdots +n_{r-1,r-1}}\\
& \times {m_1+n_{11}+\cdots +n_{r-1, 1} \brace \ell_1}
 {m_2-n_{11}+n_{22}+\cdots +n_{r-1, 2} \brace \ell_2}
 \cdots\\
& \times {m_r-n_{r-1,1}-\cdots -n_{r-1, r-1} \brace \ell_r}
 \frac{x_1^{m_1}\cdots x_r^{m_r}}{m_1!\cdots m_r !}\\
&
=
\left(
\sum_{m_1=0}^{\infty}
\sum_{n_{11}=0}^{\infty}
\cdots
\sum_{n_{r-1,1}=0}^{\infty}
{m_1+n_{11}+\cdots + n_{r-1, 1} \brace \ell_1}
\frac{x_1^{m_1}x_2^{n_{11}}\cdots x_r^{n_{r-1, 1}}}{m_1!n_{11}! \cdots n_{r-1, 1}!}\right)\\
&\times 
\left(
\sum_{m_2=0}^{\infty}
\sum_{n_{22}=0}^{\infty}
\cdots
\sum_{n_{r-1,2}=0}^{\infty}
{m_2+n_{22}+\cdots + n_{r-1, 2} \brace \ell_2}
\frac{x_2^{m_2}x_3^{n_{22}}\cdots x_r^{n_{r-1, 2}}}{m_2!n_{22}! \cdots n_{r-1, 2}!}\right)\\
&\times \cdots \times 
\left(
\sum_{m_r=0}^{\infty}
{m_r \brace \ell_r}
\frac{x_r^{m_r}}{m_r!}\right).
\end{align*}
By Lemma \ref{Lem_Stirling_koji}, this equals to
$$
\frac{(e^{-x_1-x_2-\cdots -x_r}-1)^{\ell_1}}{\ell_1 !}
\times
\frac{(e^{-x_2-x_3-\cdots -x_r}-1)^{\ell_2}}{\ell_2 !}
\times
\cdots
\times
\frac{(e^{-x_r}-1)^{\ell_r}}{\ell_r !}.
$$
It follows that \eqref{sumsum} can be written as follows.
\begin{align*}
&\sum_{k_1, \ldots, k_r=0}^{\infty}\sum_{\ell_1, \cdots, \ell_r=0}^{\infty}(-1)^{\ell_1+\cdots +\ell_r}
(\ell_1+1)^{k_1}\cdots (\ell_1+\cdots +\ell_r+r)^{k_r}\\
&%\sum_{m_1, \ldots, m_r=0}^{\infty}
\times
(e^{-x_1-x_2-\cdots -x_r}-1)^{\ell_1}
(e^{-x_2-x_3-\cdots -x_r}-1)^{\ell_2}
\times
\cdots
\times
(e^{-x_r}-1)^{\ell_r}
\times
\frac{y_1^{k_1}\cdots y_r^{k_r}}{k_1! \cdots k_r !}\\
&=
\sum_{\ell_1, \cdots, \ell_r=0}^{\infty}
(1-e^{-x_1-x_2-\cdots -x_r})^{\ell_1}
(1-e^{-x_2-x_3-\cdots -x_r})^{\ell_2}
\cdots
(1-e^{-x_r})^{\ell_r}\\
&\times
\left(
\sum_{k_1=0}^{\infty}
\frac{((\ell_1+1)y_1)^{k_1}}
{k_1!}
\right)
\left(
\sum_{k_2=0}^{\infty}
\frac{((\ell_1+\ell_2+2)y_2)^{k_2}}
{k_2!}
\right)
\cdots 
\left(
\sum_{k_r=0}^{\infty}
\frac{((\ell_1+\cdots +\ell_r+r)y_r)^{k_r}}
{k_r!}
\right)\\
&=
\sum_{\ell_1, \cdots, \ell_r=0}^{\infty}
(1-e^{-x_1-x_2-\cdots -x_r})^{\ell_1}
(1-e^{-x_2-x_3-\cdots -x_r})^{\ell_2}
\cdots
(1-e^{-x_r})^{\ell_r}\\
&\times
e^{(\ell_1+1)y_1}
e^{(\ell_1+\ell_2+2)y_2}
\cdots 
e^{(\ell_1+\cdots +\ell_r+r)y_r}\\
&=
\sum_{\ell_1=0}^{\infty}
((1-e^{-x_1-x_2-\cdots -x_r})e^{y_1+y_2+\cdots+y_r})^{\ell_1}
\sum_{\ell_2=0}^{\infty}
((1-e^{-x_2-x_3-\cdots -x_r})e^{y_2+\cdots+y_r})^{\ell_2}\\
&\cdots
\sum_{\ell_r=0}^{\infty}
((1-e^{-x_r})e^{y_r})^{\ell_r}
e^{y_1+2y_2+\cdots + ry_r}\\
&=
\frac{e^{y_1+y_2+\cdots+y_r}}{
1-(1-e^{-x_1-x_2-\cdots -x_r})e^{y_1+y_2+\cdots+y_r}}
\frac{e^{y_2+\cdots+y_r}}{
1-(1-e^{-x_2-\cdots -x_r})e^{y_2+\cdots+y_r}}
\cdots
\frac{e^{y_r}}{
1-(1-e^{-x_r})e^{y_r}}\\
&=
\frac{e^{x_1+x_2+\cdots+x_r+y_1+y_2+\cdots+y_r}}{
e^{x_1+x_2+\cdots +x_r}-e^{x_1+x_2+\cdots +x_r+y_1+y_2+\cdots+y_r}+e^{y_1+y_2+\cdots+y_r}}\\
&\times
\frac{e^{x_2+\cdots+x_r+y_2+\cdots+y_r}}{
e^{x_2+\cdots +x_r}-e^{x_2+\cdots +x_r+y_2+\cdots+y_r}+e^{y_2+\cdots+y_r}}
\times
\cdots
\times
\frac{e^{x_r+y_r}}{
e^{x_r}-e^{x_r+y_r}+e^{y_r}}.\;\;\blacksquare
\end{align*}
\end{pf}
This yields the assertion of the Theorem \ref{Thm_dual_KanekoTsumura}.

\begin{Remark}
\rm{The} case of $r=1$ in Theorem \ref{generatingexp} coincides with the well-known fact given 
by M. Kaneko (see \cite[Proof of Theorem 2]{Kaneko}). 
\end{Remark}

\begin{Remark}
\rm{As} a multiple version of ${\mathbb B}_n^{(k)}$, 
the following definition is also known.
\begin{align*} 
\frac{\mathrm{Li}_{k_1, \ldots, k_r}(1 - e^{-t})}{(1 - e^{-t})^r} = \sum_{n = 0}^\infty \mathbb{B}_{n}^{(k_1, \ldots, k_r)} \frac{t^{n} }{n!},
\end{align*}
where
\begin{align*}
\mathrm{Li}_{k_1, \ldots, k_r} (z) = \sum_{0 < m_1 < \dots < m_r} \frac{z^{m_r}}{m_1^{k_1} m_2^{k_2} \dots m_r^{k_r}} 
\end{align*}
for $r \in \mathbb{N}, k_1, \ldots, k_r \in \mathbb{Z}$ and $z \in \mathbb{C}$ with $|z| \le 1$. 
Notice that by setting all $z_i=z$ ($1\leq i \leq r$) in $\mathrm{Li}_{k_1, \ldots, k_r}^{\sha} (z_1, \ldots, z_r)$, we obtain $\mathrm{Li}_{k_1, \ldots, k_r} (z)$.
That is, restricting all $x_i$ ($1\leq i\leq r-1$) in \eqref{Def_multi-indexed} to $0$ yields $\mathbb{B}_{n}^{(k_1, \ldots, k_r)}$. In other words, $\mathbb{B}_{0, \ldots, 0, n}^{(k_1, \ldots, k_r)}=\mathbb{B}_{n}^{(k_1, \ldots, k_r)}$.
The properties of this Bernoulli numbers  $\mathbb{B}_{n}^{(k_1, \ldots, k_r)}$ also have been investigated well(cf. \cite{Bayad_Hamahata}, \cite{Furusho}, \cite{Hamahata_Masubuchi1}, \cite{Hamahata_Masubuchi2}, \cite{Kamano}).
Especially, K. Kamano obtained a generating function representation for these Bernoulli numbers of negative index(\cite[Theorem 2]{Kamano}), 
and our result of this article, Theorem \ref{generatingexp}, includes his result as a special case.
\end{Remark}

\section*{Acknowledgement}
The authors would like to thank Professor Hirofumi Tsumura and Professor Mika Sakata for their helpful comments.
This work was supported by Japan Society for the Promotion of Science, Grant-in-Aid for Scientific Research (C) No. 22K03274 (M. Nakasuji).
%
%%%%%%%%%%%%%%%%%%%%%%%%%%%%%%%%%%

%%%%%%%%%%%%%% 
\bigskip
\noindent
\textsc{Tomoko Kikuchi}\\
Faculty of Science and Technology, \\
 Sophia University, \\
 7-1 Kio-cho, Chiyoda-ku, Tokyo 102-8554, Japan\\
% \texttt{nakasuji@sophia.ac.jp}\\

\medskip
\noindent
\textsc{Maki Nakasuji}\\
Department of Information and Communication Science, Faculty of Science  and Technology, \\
 Sophia University, \\
 7-1 Kio-cho, Chiyoda-ku, Tokyo 102-8554, Japan \\
 \texttt{nakasuji@sophia.ac.jp}\\
\medskip

\end{document}